\newtheorem{thr}{Theorem}
\newtheorem{lem}[thr]{Lemma}
\newtheorem{cor}[thr]{Corollary}
\newtheorem{obs}[thr]{Observation}
\theoremstyle{definition}
\newtheorem{defn}[thr]{Definition}
\newtheorem{ex}[thr]{Example}
\theoremstyle{remark}
\def\B{\mathcal{B}}
\def\U{\mathcal{U}}
\def\W{\mathcal{W}}
\def\F{\mathcal{F}}
\def\H{\mathcal{H}}
\def\Rc{\mathcal{R}}
\def\S{\mathcal{S}}
\begin{document}

\title[How hard is the tensor rank?]{How hard is the tensor rank?}
%\title[The complexity of tensor rank]{The complexity of tensor rank}
%\title[The complexity of tensor rank]{The complexity of tensor rank: Some problems are harder than NP}

%    Information for first author
\author{Yaroslav Shitov}
%    Address of record for the research reported here
\address{National Research University Higher School of Economics, 20 Myasnitskaya Ulitsa, Moscow, Russia 101000}
\email{yaroslav-shitov@yandex.ru}

%    \subjclass is required.
\subjclass[2000]{15A69, 68Q17, 15A83, 13P15, 11D99}
\keywords{Tensor rank decomposition, computational complexity, symmetric tensor, matrix completion, Diophantine equations}

\begin{abstract}
We investigate the computational complexity of \textit{tensor rank}, a concept that plays fundamental role in different topics of modern applied mathematics. For tensors over any integral domain $\Rc$, we prove that the rank problem is polynomial time equivalent to solving a system of polynomial equations over $\Rc$. Our result gives a complete description of the algorithmic complexity of tensor rank and allows one to solve several known open problems. In particular, the tensor rank over $\mathbb{Z}$ turns out to be undecidable, which answers the question posed by Gonzalez and Ja'Ja' in 1980. We generalize our result and prove that the \textit{symmetric rank} admits a similar description of computational complexity as the one we give for usual rank. In particular, computing the symmetric rank of a rational tensor is shown to be NP-hard, which proves a recent conjecture of Hillar and Lim. As a byproduct of our approach, we get a similar characterization of the algorithmic complexity of the \textit{minimal rank matrix completion} problem, which gives a complete answer to the question discussed in 1999 by Buss, Frandsen, and Shallit.
\end{abstract}

\maketitle

\section{Introduction}

The \textit{rank} of a tensor $T$ is the smallest integer $r$ such that $T$ can be decomposed into a sum of $r$ simple tensors. This concept has been introduced eighty years ago~\cite{Hitch} and became a fundamental tool in different branches of modern science. We note that the rank decompositions have proved to be useful for studies in statistics~\cite{SH}, signal processing~\cite{LC}, complexity of computation~\cite{Str}, psychometrics~\cite{CC}, linguistics~\cite{Smo}, chemometrics~\cite{CLD}, and we refer the reader to the monograph~\cite{Lan} for a more complete survey of applications. Various recent publications, including~\cite{CGLM, KB, OO, SBG}, explain the relevance of symmetric decompositions for problems in pure mathematics and engineering. The importance for applications stimulates the research on algorithms solving the rank decomposition problems and their computational complexity.

The first step towards understanding the computational complexity has been made by H\r{a}stad~\cite{Hast}, who proved that the tensor rank of a rational matrix is NP-hard to compute. Another result of H\r{a}stad states that the rank decomposition problem is NP-complete in the case of finite fields. A recent paper of Hillar and Lim~\cite{HL} shows that H\r{a}stad's approach works well enough to prove the NP-hardness of the rank decomposition problem over $\mathbb{R}$ and $\mathbb{C}$. However, there has been no complete characterisation of the complexity of tensor rank, and many questions on it remained open. For instance, it remained unknown whether the tensor rank over $\mathbb{Z}$ can be computed by a finite algorithm, and there were no known lower bounds on the complexity of rational tensor rank better than the NP-hardness.

Our paper aims to fill this gap and give a complete description of the algorithmic complexity of tensor rank. As we show in our paper, the problem of computing the tensor rank with respect to any integral domain is polynomial time equivalent to the problem of deciding if a given system of polynomial equations has a solution over this integral domain. We prove the same result on the computational complexity of \textit{symmetric rank} for symmetric tensors over a field. As a byproduct of our approach, we get a similar characterization of the algorithmic complexity of the \textit{minimal rank matrix completion} problem. In the following section, we provide the precise formulation of our result and interesting corollaries of it.

\section{The formulation of main result and corollaries}

We deal with order-three tensors over a commutative ring $\Rc$. In other words, a \textit{tensor} is a three-dimensional array $T$ with elements $T(i|j|k)$ in $\Rc$, where $i,j,k$ run over corresponding indexing sets $I,J,K$. We write $T\in\Rc^{I\times J\times K}$ and say that $|I|\times|J|\times|K|$ is the \textit{size} of $T$. A tensor $T$ is called \textit{symmetric} if $I=J=K$ and we have $T(i_1|i_2|i_3)=T(j_1|j_2|j_3)$ whenever $(i_1,i_2,i_3)$ is a permutation of $(j_1,j_2,j_3)$.

Given three vectors $a\in\Rc^I$, $b\in\Rc^J$, $c\in\Rc^K$, we define the tensor $a\otimes b\otimes c\in\Rc^{I\times J\times K}$ by setting its $(i,j,k)$th entry to be $a_ib_jc_k$. Tensors arising in this way are called \textit{rank one} or \textit{simple} with respect to $\Rc$. It is important to note that, if we allow the vectors $a,b,c$ to contain elements not from $\Rc$ but rather from some extension $\S$, we may possibly get a different set of simple tensors. Actually, it is well known (see~\cite{dSLi}) that the rank of a tensor may depend on an extension of $\Rc$ in which we take the entries of rank-one tensors in sum decompositions. So for any extension $\S\supset\Rc$, we denote by $\operatorname{rank}_{\S} T$ the \textit{rank} of $T$ \textit{with respect to} $\S$, that is, the smallest number of tensors which are simple with respect to $\S$ and sum to $T$. The following example shows that the rank with respect to a ring may depend on the extension even for matrices (of which we think of as $m\times n\times 1$ tensors).

%If $T$ is an arbitrary tensor, then we define its \textit{rank} as the smallest integer $r$ such that $T$ is the sum of $r$ simple tensors. Similarly, the \textit{symmetric rank} of a symmetric tensor is the smallest integer $s$ such that $T$ is the sum of $s$ symmetric simple tensors.

%It is known (see also~\cite{dSLi}) that the rank of a tensor may depend on an extension of $\Rc$ which contains the entries of the rank-one tensors in sum decompositions. Therefore, we need to specify a relevant extension when we refer to the rank of a tensor. Namely, we denote by $\operatorname{rank}_{\S} T$ the \textit{rank} of $T$ \textit{with respect to} an extension $\S\supseteq\Rc$, that is, the smallest number of simple tensors over $\S$ whose sum is $T$. Similarly, we denote by $\operatorname{srank}_{\S} T$ the \textit{symmetric rank} of $T$ \textit{with respect to} $\S$. The following example shows that the rank with respect to a ring may depend on the extension even for matrices (of which we think of as $m\times n\times 1$ tensors).

\begin{ex}\label{exdep}(See Example~17 in~\cite{myfpm}.)
\textit{The rank of the matrix} $$\begin{pmatrix}
x&-z&0\\
0&y&x\\
y&0&z
\end{pmatrix}$$
\textit{is three over the ring $\mathbb{R}[x,y,z]$ and two over the field} $\mathbb{R}(x,y,z)$.
\end{ex}

In order to discuss the computational complexity, we need to impose some additional requirements on $\Rc$. In particular, we assume that the elements of $\Rc$ can be encoded by strings in some finite alphabet so that the addition and multiplication in $\Rc$ can be performed by polynomial time algorithms. We do not need such an assumption for an extension $\S$ in which we take the coefficients of sum decompositions, and this $\S$ can be arbitrary. In particular, our considerations are valid for usual real ranks of rational tensors, and this case corresponds to $\Rc=\mathbb{Q}$, $\S=\mathbb{R}$. We are ready to formulate one of the main results.

%The technique used in our paper is based on \textit{matrix completion} problems. We say that a matrix $M$ with entries in $\Rc\cup\{*\}$ is an incomplete matrix over $\Rc$, and any matrix obtained by replacing the $*$'s with elements in $\S$ is called a \textit{completion} of $M$ over $\S$. We are ready to formulate our main result.

%\begin{thr}\label{thrm2}
%Let $\Rc\subseteq \S$ be integral domains, and let $f_1,\ldots,f_p$ be polynomials with coefficients in $\Rc$. There is a polynomial time algorithm that constructs an incomplete matrix $M$ over $\Rc$, an order-three tensor $\mathcal{T}$ over $\Rc$, and an integer $r$ for which the following conditions are equivalent:
%
%\noindent (1) the equations $f_1=0,\ldots,f_p=0$ have a simultaneous solution in $\S$;
%
%\noindent (2) there exists a rank-three completion of $M$ over $\S$;
%
%\noindent (3) the rank of $\mathcal{T}$ with respect to $\S$ does not exceed $r$. %These $\mathcal{T}$ and $r$ do not depend on the choice of $\S$.
%\end{thr}

\begin{thr}\label{thrm2}
Let $\Rc\subseteq \S$ be integral domains, and let $f_1,\ldots,f_p$ be polynomials with coefficients in $\Rc$. There is a polynomial time algorithm that constructs an order-three tensor $\mathcal{T}$ over $\Rc$ and an integer $r$ such that the following are equivalent:

\noindent (1) the equations $f_1=0,\ldots,f_p=0$ have a simultaneous solution in $\S$;

\noindent (2) the rank of $\mathcal{T}$ with respect to $\S$ does not exceed $r$. 

Moreover, these $\mathcal{T}$ and $r$ do not depend on the choice of $\S$.
\end{thr}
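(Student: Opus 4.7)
The plan is a two-stage reduction: first encode the polynomial system as a minimal-rank matrix completion problem, then convert the matrix completion problem to a tensor rank problem. The construction must be manifestly defined over $\Rc$ so that the equivalence with (1) holds uniformly for every extension $\S \supseteq \Rc$.

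\textbf{Stage 1 (from polynomials to elementary relations).} By introducing polynomially many auxiliary variables I would rewrite $f_1=0,\ldots,f_p=0$ as a system of elementary equations of the shapes $y=x_1+x_2$, $y=x_1x_2$, and $y=c$ with $c\in\Rc$, in the standard way (one auxiliary variable per arithmetic gate). Each such elementary relation can be encoded as a rank-one condition on a $2\times 2$ partial matrix over $\Rc$; for example, the matrix $\left(\begin{smallmatrix}1&x_1\\x_2&y\end{smallmatrix}\right)$ has rank at most one precisely when $y=x_1x_2$, and similar gadgets handle the linear and constant cases.

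\textbf{Stage 2 (combining gadgets into one matrix completion).} Next I would assemble the gadgets into a single partial matrix $M$ by placing them on a block diagonal. Shared variables are realized by repeating a variable entry in several block positions. The principal technical point is to enforce that any completion of $M$ of the prescribed target rank must respect the block structure, so that the ranks of the blocks add up. The standard way to do this is to pad the block diagonal with a generic rank-forcing scaffold, e.g.\ identity-like blocks and zero off-blocks, together with enough fixed entries so that the rank of $M$ equals $\sum r_i$ exactly when every block attains its own minimum rank. This produces, in polynomial time, a partial matrix $M$ over $\Rc$ and an integer $r_0$ such that $M$ has an $\S$-completion of rank at most $r_0$ iff the system (1) is solvable in $\S$.

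\textbf{Stage 3 (from matrix completion to tensor rank).} Let the free entries of $M$ lie at positions $(i_1,j_1),\ldots,(i_N,j_N)$. I would build a tensor $\mathcal{T}$ of format $m\times n\times(N+1)$ whose first slice is $M$ with its free entries set to $0$ and whose $(s{+}1)$-th slice is the indicator $e_{i_s}e_{j_s}^{\mathsf T}$, then set $r:=r_0+N$. The easy direction is that any rank-$r_0$ completion yields $r_0$ rank-one summands that reproduce the first slice, while the $N$ indicator slices are absorbed by one rank-one term each (using the extension variables as scalars in the third factor). The converse rigidity statement, that every rank-$r$ decomposition of $\mathcal{T}$ over $\S$ projects to a rank-$r_0$ completion of $M$ over $\S$, is the main obstacle. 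I would handle it by a slice/pencil argument: any decomposition gives matrices $A\in\S^{m\times r}$, $B\in\S^{n\times r}$, $C\in\S^{(N+1)\times r}$ realizing $\mathcal{T}$, and I would show that the columns of $C$ must, up to reindexing, split into $r_0$ columns supported on the first coordinate (producing a rank-$r_0$ matrix equal to the completion at the fixed positions) and $N$ columns of a controlled form that fill in the free positions. Additional fixed entries in the Stage 2 scaffold can be used to obstruct mixed contributions and force this split.

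\textbf{Expected difficulty.} The computational and conceptual routine lies in Stages 1 and 2; the hard part is the converse in Stage 3, i.e.\ the rigidity of the tensor decomposition. Controlling it without appealing to algebraic closure or to special properties of $\S$ is what makes the argument work uniformly over every integral domain $\S$, which in particular forces the gadgets and the scaffolding to be designed purely from rank-one incidence data rather than from any spectral or genericity assumption.
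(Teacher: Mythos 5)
Your overall architecture (polynomial system $\to$ minimal-rank matrix completion $\to$ Derksen-type tensor with one matrix-unit slice per $*$ entry) is the same as the paper's, and your Stage~3 easy direction and lower bound are fine. The genuine gap is in Stage~2. In the minimal-rank completion problem each $*$ entry is an \emph{independent} unknown: ``repeating a variable entry in several block positions'' does not constrain a completion to assign equal values to those positions. With fixed zero off-diagonal blocks the block ranks do add, but then the occurrences of $x_i$ in different gadgets are completely decoupled, so each $2\times 2$ gadget such as $\left(\begin{smallmatrix}1&*\\ *&*\end{smallmatrix}\right)$ is trivially completable to rank one and your matrix $M$ carries no information about simultaneous solvability; with free off-blocks, rank additivity itself fails. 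The variant in which some $*$'s may be \emph{required} to take the same value is exactly the problem already settled in the reference [BFS] cited by the paper, and the paper stresses that the plain version is open precisely because this consistency must be extracted from rank rigidity alone. That is what the paper's single global gadget does: it takes the closure set $\sigma(F)$ of all partial sums/products, forms the matrix $\U$ of triples with a $\pm1$ coordinate and the Gram matrix $\W=\U^\top\U$, fixes the entries of $\B$ that are forced to be constants (or $0$ at entries equal to some $f_i$), and proves a rigidity lemma: every rank-\emph{three} completion equals $\U(\xi)^\top\U(\xi)$ up to the $GL_3$ action, where $\xi$ is a common zero of $F$. Consistency of the variables is thus enforced inside one rank-3 structure, not by a scaffold around independent blocks; your proposal contains no mechanism that does this job.

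This gap propagates into Stage~3. The converse direction of the Derksen reduction is not obtained in the paper from ``additional fixed entries obstructing mixed contributions,'' but from a uniqueness statement about the completion gadget itself (Lemma~\ref{lem75}): any rank-one matrix that agrees, off the $*$ positions, with an $\S$-linear combination of the three rank-one summands of a completion must coincide with one of them up to scale; this is what rules out four or more summands having nontrivial zeroth slices and pins the minimal rank at $\tau+3$. That lemma in turn uses the specific structure of $\B$ (the identity $E$-block and the fact that every column of $\U$ has a constant coordinate). With your block-diagonal matrix of large target rank $r_0$, the analogous uniqueness of minimal completions is not available and would be the hard content, so the ``split of the columns of $C$'' you hope for cannot be forced by generic scaffolding. (A minor point: insisting on avoiding algebraic closure is unnecessary --- since $\S$ is an integral domain, the paper simply works in its fraction field for the rigidity and lower-bound arguments, and this costs nothing.)
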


Let us explain why Theorem~\ref{thrm2} can be seen as a complete description of the algorithmic complexity of tensor rank. This theorem presents a polynomial time algorithm that, for any finite family $F$ of polynomials over $\Rc$, constructs a tensor whose rank with respect to $\S$ is at most $r$ if and only if the zero locus of $F$ contains a point with coordinates in $\S$. On the other hand, the problem of decomposing a tensor with entries in $\Rc$ is naturally formulated as a system of polynomial equations with coefficients in $\Rc$, so Theorem~\ref{thrm2} proves that these two problems are in fact equivalent.

\begin{thr}\label{cor80}
Let $\Rc\subseteq \S$ be integral domains. The problem of deciding if a tensor with entries in $\Rc$ has rank at most $r$ with respect to $\S$ is polynomial time equivalent to the problem of deciding if a given system of polynomial equations with coefficients in $\Rc$ has a solution in $\S$.
\end{thr}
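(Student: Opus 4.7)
The plan is to establish the polynomial-time equivalence by exhibiting reductions in both directions; one direction is furnished directly by Theorem~\ref{thrm2}, while the other is the standard encoding of a rank decomposition as a polynomial system.

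For the reduction from polynomial systems to tensor rank, I would simply invoke Theorem~\ref{thrm2}. Given polynomials $f_1,\ldots,f_p$ with coefficients in $\Rc$, that theorem produces in polynomial time a tensor $\T$ over $\Rc$ and an integer $r$ such that $f_1=\ldots=f_p=0$ has a solution in $\S$ if and only if $\operatorname{rank}_\S\T\le r$. Since $r$ is produced together with $\T$, the pair $(\T,r)$ is a legitimate instance of the tensor rank decision problem, and this yields the desired reduction.

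For the reverse reduction I would translate the statement ``$\operatorname{rank}_\S T\le r$'' directly into a polynomial system. Given $T\in\Rc^{I\times J\times K}$, introduce $r(|I|+|J|+|K|)$ indeterminates $a_s(i), b_s(j), c_s(k)$ for $s\in\{1,\ldots,r\}$, $i\in I$, $j\in J$, $k\in K$, and form the $|I|\cdot|J|\cdot|K|$ polynomial equations
\[
T(i|j|k)-\sum_{s=1}^r a_s(i)\,b_s(j)\,c_s(k)=0.
\]
By the very definition of rank with respect to $\S$, this system has a simultaneous solution in $\S$ precisely when $T$ can be written as a sum of at most $r$ tensors that are simple over $\S$. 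The total size of the system is polynomial in the size of $T$ and in $r$, and since the entries $T(i|j|k)$ lie in $\Rc$ and $\Rc$ admits polynomial-time arithmetic, the reduction is polynomial time.

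There is no substantive obstacle here: the deep content of the equivalence is already packaged inside Theorem~\ref{thrm2}, and Theorem~\ref{cor80} is essentially its formal restatement as a polynomial-time equivalence, combined with the tautological observation that a rank decomposition is itself a polynomial system. The only point to verify is that both reductions respect the constraint that the coefficients of the produced polynomial system (respectively, the entries of the produced tensor) lie in $\Rc$, which is immediate from the constructions above and from the hypotheses of Theorem~\ref{thrm2}.
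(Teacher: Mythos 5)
Your proposal is correct and follows the paper's own route: the paper likewise obtains one direction directly from Theorem~\ref{thrm2} and notes that the converse is the straightforward encoding of a length-$r$ decomposition of $T$ as a polynomial system with coefficients in $\Rc$ (with the harmless convention that unused terms may be zero). No substantive difference from the argument given in the text below Theorem~\ref{thrm2} and in Section~3.
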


Let us discuss several corollaries of Theorem~\ref{thrm2} that are of independent interest. In particular, we see that the rational tensor rank is polynomial time equivalent to deciding if a given \textit{Diophantine equation}\footnote{That is, a polynomial equation with integral coefficients.} has a rational solution\footnote{This is a solution to the problem of Bl\"{a}ser, see Open Problem~2 on page~119 in~\cite{Blas}.}. The latter problem is a variant of \textit{Hilbert's tenth problem}, and it is widely believed to be undecidable although this claim remains open despite the extensive research~\cite{KR, Koenin, MR, Maz, Poon}. Therefore, Theorem~\ref{thrm2} can be seen as a conditional proof of undecidability of rational tensor rank, which would confirm Conjecture~13.3 in the paper~\cite{HL} by Hillar and Lim. We note that the actual \textit{Hilbert's tenth problem}, which asks if a given Diophantine equation has an \textit{integral} solution, was famously proved to be undecidable in the $70$'s, see~\cite{MR}. This gifts to us the following corollary of Theorem~\ref{thrm2}.

\begin{cor}\label{corz}
Tensor rank over $\mathbb{Z}$ is undecidable.
\end{cor}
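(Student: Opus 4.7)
\medskip

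\noindent\textbf{Proof proposal.} The plan is to derive this as an immediate consequence of Theorem~\ref{thrm2}, applied in the special case $\Rc=\S=\mathbb{Z}$. Since $\mathbb{Z}$ is an integral domain contained in itself, the hypotheses of the theorem are satisfied. Thus, given any finite list of polynomials $f_1,\ldots,f_p\in\mathbb{Z}[x_1,\ldots,x_n]$, the theorem produces in polynomial time an order-three tensor $\mathcal{T}$ with integer entries and an integer $r$ such that $\operatorname{rank}_{\mathbb{Z}}\mathcal{T}\le r$ if and only if the system $f_1=\cdots=f_p=0$ admits a simultaneous solution in $\mathbb{Z}$. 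In particular, any algorithm that, on input $(\mathcal{T},r)$, decides whether a given integer tensor has rank at most $r$ over $\mathbb{Z}$ would, composed with this polynomial-time reduction, yield an algorithm that decides whether an arbitrary system of polynomial equations with integer coefficients has an integer solution.

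The second ingredient is the classical theorem of Matiyasevich, Robinson, Davis, and Putnam (cited here as~\cite{MR}), which resolves Hilbert's tenth problem in the negative: there is no algorithm that, given a Diophantine equation (equivalently, a finite system of such equations, since $f_1=\cdots=f_p=0$ is equivalent to $\sum f_i^2=0$ over $\mathbb{Z}$), decides whether it has an integer solution. Combining this with the reduction above, the decision problem ``does the input integer tensor have rank at most $r$ over $\mathbb{Z}$?'' cannot be solved by any algorithm; consequently, the tensor rank function over $\mathbb{Z}$ is not computable either, since from the value of the rank one reads off the answer to every such threshold question. There is no real obstacle to overcome here beyond invoking these two results; the only thing to verify carefully is that the reduction of Theorem~\ref{thrm2} is truly uniform in the choice $\S=\mathbb{Z}$, which is explicitly asserted in the ``moreover'' clause of the theorem.
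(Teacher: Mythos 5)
Your proposal is correct and matches the paper's own argument: the paper likewise obtains this corollary by specializing the reduction of Theorem~\ref{thrm2} (equivalently Theorem~\ref{cor80}) to $\Rc=\S=\mathbb{Z}$ and invoking the undecidability of Hilbert's tenth problem from~\cite{MR}. Nothing further is needed.
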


Corollary~\ref{corz} answers the question by Gonzalez and Ja'Ja' dating back to 1980, see page~77 of~\cite{GJ}. The authors did not explicitly mention the concept of tensor rank, but they formulated an equivalent question on the multiplicative complexity of simultaneous computing of bilinear forms. It is well known that these formulations are equivalent (see~\cite{Str}), and we will further use this equivalence and comment it on in Section~3.

Another interesting corollary concerns the problem of computing the real rank of a rational tensor. By Theorem~\ref{thrm2}, this problem turns out to be equivalent to the so-called \textit{existential theory of the reals}, see~\cite{Mato}. We can get a similar result for the rank of a rational tensor computed with respect to any field of characteristic zero. As we explain in Section~3, the existence of a solution of a system of Diophantine equations is an NP-hard problem over any integral domain, and this gives us the following corollary of Theorem~\ref{cor80}.

\begin{cor}\label{cornph}
Tensor rank is NP-hard over any integral domain.
\end{cor}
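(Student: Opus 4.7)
The plan is to combine Theorem~\ref{cor80} with a routine polynomial-time reduction from $3$-SAT to polynomial system solvability. First, I would apply Theorem~\ref{cor80} with $\S = \Rc$: it asserts that deciding whether a tensor with entries in $\Rc$ has rank at most $r$ over $\Rc$ is polynomial time equivalent to deciding whether a system of polynomial equations with coefficients in $\Rc$ admits a solution in $\Rc$. So it suffices to prove that the latter decision problem is NP-hard for every integral domain $\Rc$.

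Next, I would reduce $3$-SAT to polynomial system solvability over $\Rc$. Given a $3$-SAT instance on Boolean variables $x_1,\ldots,x_n$ with clauses $C_1,\ldots,C_m$, introduce the equation $x_i(x_i - 1) = 0$ for each $i$; because $\Rc$ is an integral domain and $0 \ne 1$, any solution in $\Rc$ must satisfy $x_i \in \{0,1\}$. For each clause $C_j$, replace each positive literal $x_i$ by $1 - x_i$ and each negative literal $\neg x_i$ by $x_i$, and require that the product of the three resulting factors equal zero; this forces at least one literal of $C_j$ to evaluate to true. The resulting system uses only integer coefficients (hence lies in every integral domain), is solvable in $\Rc$ if and only if the $3$-SAT instance is satisfiable, and is clearly computable in polynomial time.

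Combining the reduction with the equivalence from Theorem~\ref{cor80} then gives the claim. I do not foresee substantial obstacles: all the deep content is packaged in Theorem~\ref{thrm2}, and the $3$-SAT encoding is a textbook trick. The only point worth checking is that the integral domain axioms suffice to force $x_i \in \{0,1\}$ from $x_i(x_i - 1) = 0$, which they do because an integral domain has no zero divisors and satisfies $0 \ne 1$.
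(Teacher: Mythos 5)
Your 3-SAT encoding itself is fine and is essentially the paper's own argument: the Observation in Section~3 also forces Boolean values via $y_i^2-y_i=0$ and encodes clauses polynomially (the paper uses the inclusion--exclusion identity $y_i+y_j+y_k-y_iy_j-y_iy_k-y_jy_k+y_iy_jy_k=1$, you use a product of complemented literals; both are correct, and your product form indeed needs only that an integral domain has no zero divisors and $0\neq 1$). The genuine flaw is the instantiation ``apply Theorem~\ref{cor80} with $\S=\Rc$''. Under the paper's conventions, $\Rc$ --- the ring containing the tensor entries and the coefficients of the polynomial system --- must admit a string encoding with polynomial-time addition and multiplication; it is only the extension $\S$, with respect to which the rank is computed, that may be arbitrary. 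With $\S=\Rc$ you therefore obtain NP-hardness only for computable integral domains such as $\mathbb{Z}$, $\mathbb{Q}$ and finite fields, and you lose exactly the cases the corollary is meant to add beyond H\r{a}stad, namely rank with respect to $\mathbb{R}$, $\mathbb{C}$, or any other non-computable integral domain (the Hillar--Lim cases), where ``a tensor with entries in $\Rc$'' is not even a well-posed input for a Turing machine.

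The repair is already implicit in your own remark that your system has integer coefficients: keep $\S$ equal to the given integral domain and take $\Rc$ to be its prime subring ($\mathbb{Z}$ in characteristic zero, $\mathbb{Z}/p\mathbb{Z}$ in characteristic $p$), which is computable and sits inside $\S$. Your 3-SAT system has coefficients there and is solvable in $\S$ if and only if the formula is satisfiable, so the reduction direction of Theorem~\ref{thrm2} (for NP-hardness you need only the reduction from polynomial solvability to tensor rank, not the full equivalence of Theorem~\ref{cor80}) produces, in polynomial time, a tensor with entries in the prime ring and an integer $r$ such that $\operatorname{rank}_{\S}\mathcal{T}\leqslant r$ if and only if the formula is satisfiable. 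With that adjustment your argument coincides with the paper's deduction of Corollary~\ref{cornph}.
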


This corollary is a generalization of the results by H\r{a}stad~\cite{Hast} and Hillar and Lim~\cite{HL}. H\r{a}stad proved the NP-hardness of tensor rank over the rationals and finite fields, and Hillar and Lim used his approach to show the NP-hardness for the reals and complex numbers.

\medskip

Now let $T$ be a symmetric tensor. Recall that the \textit{symmetric rank} of $T$ \textit{with respect to} a field $\S$ is the smallest number of simple symmetric tensors over $\S$ whose linear span contains $T$. We denote this quantity by $\operatorname{srank}_{\S} T$. As we show in our paper, the analogue of Theorem~\ref{thrm2} is valid for \textit{symmetric ranks} of tensors over a field. This result has the following important consequence, which was conjectured by Hillar and Lim.

\begin{thr}\label{prob1}\emph{(Conjecture 13.2 in~\cite{HL}.)}
Let $S\in\mathbb{Q}^{n\times n\times n}$ be a symmetric tensor. Computing the symmetric rank of $S$ with respect to any field $\mathbb{K}\supset\mathbb{Q}$ is NP-hard.
\end{thr}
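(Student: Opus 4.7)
The plan is to establish a symmetric analogue of Theorem~\ref{thrm2}: for polynomials $f_1,\ldots,f_p$ over a field $\Rc$, I would construct in polynomial time a symmetric tensor $\mathcal{T}^{\mathrm{sym}}$ over $\Rc$ and an integer $r$ such that $\operatorname{srank}_{\S}(\mathcal{T}^{\mathrm{sym}})\leq r$ if and only if $f_1=\ldots=f_p=0$ has a simultaneous solution in $\S$, with $\mathcal{T}^{\mathrm{sym}}$ and $r$ independent of the particular extension $\S\supseteq\Rc$. Once this symmetric analogue is in hand, Theorem~\ref{prob1} drops out by the same argument that yields Corollary~\ref{cornph}: satisfiability of a system of polynomial equations with integer coefficients over any field $\mathbb{K}\supset\mathbb{Q}$ is NP-hard, so the symmetric rank problem inherits NP-hardness via the polynomial-time reduction.

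To build $\mathcal{T}^{\mathrm{sym}}$, I would first take the tensor $\mathcal{T}\in\Rc^{I\times J\times K}$ supplied by Theorem~\ref{thrm2} and symmetrize it on the disjoint union $N=I\sqcup J\sqcup K$. That is, I set $\mathcal{T}^{\mathrm{sym}}(n_1|n_2|n_3)$ equal to an appropriate entry of $\mathcal{T}$, averaged over the six permutations as symmetry requires, whenever $\{n_1,n_2,n_3\}$ meets each of $I,J,K$ in exactly one element, and zero otherwise. One direction of the equivalence is routine: if $\mathcal{T}=\sum_{i=1}^{s} a_i\otimes b_i\otimes c_i$ over $\S$, then each summand contributes its symmetrization on $N$, and a standard polarization identity expresses that symmetrization as a sum of a bounded number of symmetric cubes $(a_i+\lambda b_i+\mu c_i)^{\otimes 3}$ with the vectors extended by zeros outside their native blocks. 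This yields $\operatorname{srank}_{\S}(\mathcal{T}^{\mathrm{sym}})\leq Cs$ for an absolute constant $C$, and in particular sets the target value of $r$ in terms of the $r$ given by Theorem~\ref{thrm2}.

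The main obstacle will be the converse direction: given a short symmetric decomposition $\mathcal{T}^{\mathrm{sym}}=\sum_i v_i^{\otimes 3}$, I must extract an ordinary decomposition of $\mathcal{T}$ of comparable size. The hard step is forcing each $v_i$ to either be supported on a single block of $N$ or to split cleanly into three components $(a_i,b_i,c_i)\in\Rc^I\times\Rc^J\times\Rc^K$ that assemble into a rank-one summand $a_i\otimes b_i\otimes c_i$ of $\mathcal{T}$. My plan here is to reinforce $\mathcal{T}^{\mathrm{sym}}$ by attaching diagonal \emph{marker gadgets} in the sub-blocks of $N^3$ lying outside the off-diagonal part $I\times J\times K$, designed so that any $v^{\otimes 3}$ with the wrong kind of mixed support incurs a provably unavoidable cost in every minimal decomposition. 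The rigidity already built into the tensor of Theorem~\ref{thrm2} should supply most of the structure needed, and the delicate point is to make the block-separation argument run uniformly over every extension $\S$, so that a single pair $(\mathcal{T}^{\mathrm{sym}},r)$ works simultaneously over all $\S\supseteq\Rc$, matching the ``moreover'' clause of Theorem~\ref{thrm2}.
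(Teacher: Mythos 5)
Your overall pipeline is the same as the paper's (feed the tensor of Theorem~\ref{thrm2} into a rank-to-symmetric-rank reduction obtained by symmetrizing over $I\sqcup J\sqcup K$), but the quantitative core of that reduction is where your plan breaks. The polarization argument in your easy direction only gives $\operatorname{srank}_{\mathcal S}(\mathcal T^{\mathrm{sym}})\leqslant C\cdot\operatorname{rank}_{\mathcal S}(\mathcal T)$ with a multiplicative constant $C>1$, while the trivial converse only gives $\operatorname{rank}_{\mathcal S}(\mathcal T)\leqslant\operatorname{srank}_{\mathcal S}(\mathcal T^{\mathrm{sym}})$ (restricting a symmetric decomposition to the $(I|J|K)$ block). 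A multiplicative sandwich of this kind does not transport the decision threshold: the instances produced by Theorem~\ref{thrm2} have rank either $\leqslant r$ or $\geqslant r+1$, with no promise gap, so ``$\operatorname{srank}\leqslant Cr$'' does not distinguish them. What you need is an \emph{exact} (additive) formula relating the symmetric rank of the constructed tensor to $\operatorname{rank}_{\mathcal S}(\mathcal T)$, and your sketch does not provide one. Your proposed fix --- ``marker gadgets'' that force every $v_i$ in a minimal symmetric decomposition to respect the block structure --- is exactly the hard, unspecified step: analyzing the support of arbitrary symmetric decompositions over an arbitrary extension field is not something the rigidity of the Theorem~\ref{thrm2} tensor gives you for free, and no concrete gadget or argument is offered.

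The paper avoids any structural analysis of symmetric decompositions. It adjoins to the symmetrization $S(T)$ a family of $(p,q)$-unit $1$-, $2$-, and $3$-slices indexed by all pairs inside each block (Definition~\ref{defredsym}), $4.5(n^2+n)$ slices in total, and proves the exact identity $\operatorname{srank}_{\mathcal F}\mathcal T(S(T))=\operatorname{rank}_{\mathcal F}T+4.5(n^2+n)$ for $|\mathcal F|\geqslant 9$. The upper bound (Lemmas~\ref{lemmainsym} and~\ref{lemrank2}) uses concatenated cubes $w_t^{\otimes 3}$ with $w_t=(a_t,b_t,c_t)$ --- no polarization, hence no factor $C$ --- and absorbs all the cross-term ``junk'' (which lives only in blocks where two indices come from the same set) into the adjoined unit slices at the fixed cost $4.5(n^2+n)$, using the explicit symmetric-rank-$3$ decomposition of a $2\times2\times2$ gadget (Example~\ref{ex2x2x2}, which is where the hypothesis $|\mathcal F|\geqslant 9$ enters). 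The lower bound (Lemma~\ref{lemrank1}) is an ordinary-rank bound: since the adjoined slices are linearly independent rank-one slices whose span cannot touch the $(I|J|K)$ block, the Hopcroft--Kerr-type Lemma~\ref{lemcompl} strips them off one layer at a time, giving $\operatorname{srank}\geqslant\operatorname{rank}\geqslant\operatorname{rank}_{\mathcal F}T+4.5(n^2+n)$. Unless you can supply an exact additive relationship of this kind (or some other mechanism that preserves the threshold), your reduction does not establish Theorem~\ref{prob1}.
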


In view of the known hardness results for tensor ranks, many authors believed that the symmetric rank is hard to compute as well. In particular, Ottaviani and Oeding in~\cite{OO} and Robeva in~\cite{Robeva} explicitly write that the symmetric rank is hard to compute, although they are refer to the paper~\cite{HL} in which the NP-hardness is just a conjecture. The recent paper~\cite{GKPT} also mentions this conjecture and solves a related problem on the partial derivatives of polynomials. Several papers, including~\cite{BGI, BCMT}, were devoted specifically to the problem of computing symmetric rank, but the result of Theorem~\ref{prob1} remained unknown until now.

\medskip

One of the main tools used in our paper is the \textit{minimal rank matrix completion} problem. We say that a matrix $M$ with entries in $\Rc\cup\{*\}$ is an \textit{incomplete matrix} over $\Rc$, and any matrix obtained by replacing the $*$'s with elements in $\S$ is called a \textit{completion} of $M$ over $\S$. What is the smallest value that the rank of a completion of a given incomplete matrix may take? As a byproduct of our approach, we will get the description of the complexity of this problem as in Theorem~\ref{cor80}.

\begin{thr}\label{cor79}
Let $\Rc\subseteq \S$ be commutative rings. The problem of deciding if a given incomplete matrix with entries in $\Rc\cup\{*\}$ has a completion of rank three with respect to $\S$ is polynomial time equivalent to the problem of deciding if a given system of polynomial equations with coefficients in $\Rc$ has a solution over $\S$.
\end{thr}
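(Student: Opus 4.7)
The proof consists of two polynomial-time reductions.

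For the easier direction (matrix completion to polynomial systems), I plan to introduce a variable $y_k$ for each $*$ entry of the input incomplete matrix $M$, together with $3(m+n)$ auxiliary variables for the entries of a hypothetical factorization $M = AB$ with $A$ of size $m \times 3$ and $B$ of size $3 \times n$. The identities $(AB)_{ij} = M_{ij}$ over all positions $(i,j)$ form a system of $mn$ polynomial equations of total degree at most two with coefficients in $\Rc$, and this system has a solution in $\S$ if and only if $M$ admits a completion of rank at most $3$ over $\S$.

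The reverse direction is the substantial part. Given polynomials $f_1, \ldots, f_p$ over $\Rc$, I plan to build an incomplete matrix whose completions of rank at most $3$ are in correspondence with the common zeros of the $f_j$. The central trick is a $3 \times 3$ identity block placed in the top-left corner: in any compatible factorization $M = AB$, this forces the top three rows of $A$ and the leftmost three columns of $B$ to be mutually inverse $3 \times 3$ matrices over $\S$, and after absorbing a change of basis I may assume both equal $I_3$. Then every subsequent row of $A$ and every subsequent column of $B$ becomes an arbitrary vector in $\S^3$, and each entry of $M$ is an explicit bilinear form in them. Prescribed entries in the top-right and bottom-left strips fix individual coordinates of these vectors, while prescribed entries in the bottom-right block impose bilinear equations.

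With this mechanism in hand, I will dedicate specific rows and columns to carry the variables $x_1, \ldots, x_n$ of the polynomial system, and design small gadgets realising addition and multiplication: an addition gadget prescribes a single bottom-right entry whose bilinear expansion equals the desired sum, while a multiplication gadget uses a fresh auxiliary row and column, with suitable $0$-$1$ entries in the normalising strips, to produce a pure product. By introducing one auxiliary row or column per intermediate monomial and assembling all gadgets into one block matrix that shares the same identity block, I can encode each $f_j$ and then enforce $f_j = 0$ by prescribing the corresponding entry to be zero. Variables with the same name are automatically identified across gadgets because they all live in the same factorization bases.

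The main obstacle will be the bookkeeping: verifying that no degenerate rank-$3$ completion evades the identity block, that the gadgets compose consistently, and that the resulting matrix has polynomial size. The last point in particular requires expressing each $f_j$ via a straight-line program of additions and multiplications rather than as a list of monomials, so that high-degree polynomials do not blow up the construction. Once these checks are in place, the two reductions together prove the stated equivalence.
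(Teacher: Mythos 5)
Your easy direction is fine and matches the paper's remark that rank-at-most-three completability is literally a system of bilinear equations $M=AB$ in the unknown factor entries and the $*$ entries. Your hard direction also has the same architecture as the paper's Section~4: a $3\times 3$ identity block that normalizes any factorization so that the remaining rows of $A$ and columns of $B$ become vectors in $\S^3$, with prescribed strip entries pinning individual coordinates and prescribed interior entries imposing dot-product constraints, plus dedicated rows/columns for the variables and for intermediate subexpressions (the paper's set $\sigma(F)$ of partial sums and partial products plays the role of your straight-line program values).

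The genuine gap is soundness: you must show that \emph{every} rank-three completion, not just the intended ones, forces the unknown coordinates to satisfy the addition and multiplication relations, and your gadgets as described cannot do this. A prescribed entry only pins a dot product $u\cdot v$; it does not pin an individual unknown coordinate of $u$ or $v$. An unknown value carried in a column can only be transferred to a row (and back) by forcing that row to be orthogonal to two already-determined columns whose span determines it, and then pinning the scale by a constant coordinate --- so ``a single bottom-right entry'' for addition and ``a fresh auxiliary row and column'' for multiplication are not enough; one needs chains of auxiliary labels, closed under negation and containing all intermediate sums and products, exactly as in the paper's Lemma~\ref{lem73} (the propagation of $p$-good and $l$-good vectors in Steps~4--8). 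Moreover, since $\S$ is only a commutative ring, ``orthogonal to two vectors hence determined up to scale'' is false in general; the paper arranges every label in $\H$ to have a $\pm 1$ coordinate and chooses the constant entries so that the forcing works without division or cancellation, a point your sketch (which files all of this under ``bookkeeping'') does not confront. Until you exhibit the concrete gadget chains and prove this forcing lemma, the reduction from polynomial systems to completion is not established.
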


The author believes that this result is new even for the case of fields. The NP-hardness of the problem being discussed was explicitly mentioned by Derksen (\cite{Derk}) in the case of fields, but I do not see how can the source he refers to be useful to prove it. %the NP-hardness of this particular version of the matrix completion problem.
Numerous related matrix completion problems are shown to be NP-hard, and there are those for which the complexity is described completely. In particular, a result similar to Theorem~\ref{cor79} holds for the version of minimal rank problem in which some of the $*$ entries may be required to take the same value, see~\cite{BFS}.
In fact, the complexity of our version was discussed in~\cite{BFS} as well, but the authors were not able to give any non-trivial lower bound on it. Also, Laurent writes in~\cite{Laur} that the minimal rank completion seems to be a difficult task, but again she does not mention any particular result on the complexity of this problem. As said above, our Corollary~\ref{cor79} does not only prove the NP-hardness of the minimal rank completion over fields, but fully determines the complexity of this problem over any commutative ring. Other related completion problems whose complexity has been known are the \textit{Euclidean distance completion} (\cite{Laur}), \textit{minimal rank sign pattern completion} (\cite{BK}), and the problem of minimizing the rank of matrices \textit{fitting} a given graph (\cite{Peet}). We note that the approximate version of the minimal rank completion problem plays an important role in applied mathematics (\cite{CP, onemore}), and the version in which the $*$ entries are taken at random is particularly important (\cite{CR}).

\medskip

The rest of the paper is devoted to the proofs of the results mentioned in this section. In Section~3, we recall several basic results on tensor decompositions and computational complexity, and we explain how all the results of the present section follow from Theorems~\ref{thrm2}, \ref{prob1}, and~\ref{cor79}. In Section~4, we prove Theorem~\ref{cor79}, and our approach can be seen as a variation of the recent investigation~\cite{mypsd} of the complexity of the psd version of the matrix completion problem. In Section~5, we employ the approach recently used by Derksen~\cite{Derk} and reduce the matrix completion problem to tensor rank. A direct application of Derksen's approach would be sufficient to prove Theorem~\ref{thrm2} in the case when $\Rc$ is a field, but this would not be enough to get Corollaries~\ref{corz} and~\ref{cornph}. We managed to adapt Derksen's construction to work over any integral domain, which allowed us to reduce the matrix completion problem to tensor rank and complete the proof of the main result. In Section~6, we prove the analogue of Theorem~\ref{thrm2} for symmetric ranks and deduce Theorem~\ref{prob1} from this result.

\section{Preliminaries}

All our results on the algorithmic complexity are based on the \textit{Turing model} of computation. As said above, we always assume that the elements of a ring $\Rc$ can be stored as finite strings, and there is a specific Turing machine that halts in polynomial time and, given two elements of $\Rc$, returns their sum and product. In particular, $\Rc$ can be taken to be a finite ring, the integers $\mathbb{Z}$, the rationals $\mathbb{Q}$, but not the reals $\mathbb{R}$ or complex numbers $\mathbb{C}$. However, we can still discuss the ranks of tensors with respect to $\mathbb{R}$ and $\mathbb{C}$ because we do not impose any restriction to an extension $\S$ with respect to which these ranks are computed. The polynomials over $\Rc$ are stored as sums of monomials, and every monomial is represented as a scalar in $\Rc$ multiplied by a product of single variables. When we say that two decision problems $\Pi_1$ and $\Pi_2$ are \textit{polynomial time equivalent}, we mean that there are polynomial time many-one reductions from $\Pi_1$ to $\Pi_2$ and from $\Pi_2$ to $\Pi_1$. In particular, Theorem~\ref{thrm2} presents such a reduction from the problem of deciding if a given system of polynomial equations with coefficients in $\Rc$ has a solution in $\S$ to the problem of deciding if a tensor with entries in $\Rc$ has rank at most $r$ with respect to $\S$. As said in the argument below Theorem~\ref{thrm2}, the converse direction of this reduction is straightforward, so Theorem~\ref{thrm2} implies Theorem~\ref{cor80}.

Since the existence of integral solutions is undecidable for Diophantine equations, Corollary~\ref{corz} indeed follows from Theorem~\ref{cor80}. We deduce Corollary~\ref{cornph} from Theorem~\ref{cor80} with the standard observation whose proof is given for completeness.

\begin{obs}
(See~\cite{KR}.)
Let $\Rc$ be an integral domain. It is NP-hard to tell if a given system of polynomial equations with integral coefficients has a solution in $\Rc$.
\end{obs}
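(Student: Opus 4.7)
The plan is to give a polynomial time many-one reduction from $3$-SAT, which is a classical NP-complete problem. Given a $3$-SAT instance with Boolean variables $x_1,\ldots,x_n$ and clauses $C_1,\ldots,C_m$, I would construct a polynomial system with integer coefficients, having one unknown $y_i$ per Boolean variable $x_i$.

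First I would impose the quadratic constraints
\[
y_i(y_i-1)=0,\qquad i=1,\ldots,n.
\]
This is the only step where the assumption that $\Rc$ is an integral domain is used: since $\Rc$ has no zero divisors, every solution of these equations in $\Rc$ satisfies $y_i\in\{0,1\}$, where $0$ and $1$ are the standard elements of the prime subring of $\Rc$. Thus the solutions of this subsystem in $\Rc$ are in bijection with Boolean assignments to $x_1,\ldots,x_n$ (with the convention $y_i=1$ encoding $x_i=\mathrm{true}$).

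Next I would translate each clause. For a $3$-SAT clause $C=\ell_1\vee\ell_2\vee\ell_3$, associate to each literal $\ell$ the polynomial $\pi(\ell)$ given by $\pi(x_i)=1-y_i$ and $\pi(\neg x_i)=y_i$; these have value $0$ in $\{0,1\}$ exactly when the literal is true. Then the clause is satisfied by an assignment if and only if $\pi(\ell_1)\pi(\ell_2)\pi(\ell_3)=0$, so I would add the degree-three equation
\[
\pi(\ell_1)\,\pi(\ell_2)\,\pi(\ell_3)=0
\]
for each of the $m$ clauses. The resulting system has $n+m$ equations, all of bounded degree and with coefficients in $\{-1,0,1\}\subset\mathbb{Z}$, so the construction is clearly polynomial time. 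By the two observations above, the system has a solution in $\Rc$ if and only if the original $3$-SAT formula is satisfiable, which establishes the NP-hardness claim.

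There is essentially no serious obstacle: the only subtle point is that the Boolean-encoding trick $y^2=y$ requires the absence of zero divisors, which is exactly what the hypothesis on $\Rc$ provides. If the reader prefers a reduction that avoids the implicit use of the characteristic (via the elements $0$ and $1$ of $\Rc$), one can equivalently reduce from a Diophantine version of $3$-SAT, but the argument above is already valid in full generality since every integral domain contains a canonical image of $\mathbb{Z}$ in which $0\ne1$.
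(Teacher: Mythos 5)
Your proposal is correct and follows essentially the same route as the paper: a reduction from 3-SAT in which the equations $y_i^2-y_i=0$ exploit the absence of zero divisors to force Boolean values, followed by one polynomial equation per clause. The only cosmetic difference is the clause gadget (you set the product of the literals' falsity indicators to zero, while the paper sets the inclusion--exclusion expansion of the OR equal to one), which changes nothing of substance.
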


\begin{proof}
We construct a reduction from 3-SAT (see~\cite{Karp}), which is the problem of determining the satisfiability of a formula in conjunctive normal form where each clause is limited to at most three literals. In other words, this problem asks if a given set of conditions of the forms $x_i\vee x_j\vee x_k$ and $x_u\neq x_v$ can be satisfied simultaneously by some assignment of Boolean variables $(x_i)$.

Let $\varphi$ be a formula that satisfies the above restrictions and involves variables $x_1,\ldots,x_n$. We are going to construct a family $F$ of polynomial equations with integral coefficients which have a simultaneous solution in $\Rc$ if and only if $\varphi$ is satisfiable. For any variable $x_i$, we add to $F$ the equation $y_i^2-y_i=0$. Now the solutions of $F$ have to consist of zeros and ones, and we can identify the domains of $x_i$ and $y_i$. Every clause of the form $x_i\vee x_j\vee x_k$ is now equivalent to the equation
$$y_i+y_j+y_k-y_iy_j-y_iy_k-y_jy_k+y_iy_jy_k=1,$$
and the clause $x_u\neq x_v$ is represented by $y_u+y_v=1$.
\end{proof}

%We construct a reduction from SAT. For any Boolean formula $\varphi$ involving variables $x_1,\ldots,x_n$, we define the following set of polynomials $\psi(\varphi)$. We define $\psi(x_i)=\{y_i\}$, and also $\psi(\neg\varphi_1)=1-\psi(\varphi_1)

%A Boolean formula $\varphi$ involving variables $x_1,\ldots,x_n$ will correspond to the equations $\psi(\varphi)=0$ and $y_i^2-y_i=0$ with $i=1,\ldots,n$. Here, the polynomial $\psi(\varphi)\in\mathbb{Z}[y_1,\ldots,y_n]$ is defined as follows.

%Let $\varphi_1,\varphi_2$ be Boolean formulas involving variables $x_1,\ldots,x_n$. 

Therefore, we have explained how Theorem~\ref{cor80} and Corollaries~\ref{corz},~\ref{cornph} follow from Theorem~\ref{thrm2}. Now we are going to comment on the relation between the tensor rank and multiplicative complexity of \textit{bilinear programs}. We refer the reader to~\cite{GJ, Str} for a definition of a bilinear program, and we recall that the rank of the corresponding tensor equals the minimal number of multiplications such a program needs to perform in order to compute a collection of bilinear forms (see~\cite{GJ, Hast, Str}). This correspondence allows us to use the tensor analogues of results that were initially formulated and proved in terms of multiplicative complexity. In particular, we see that Corollary~\ref{corz} does actually answer the above mentioned question by Gonzalez and Ja'Ja'.

Let $T$ be a tensor in $\Rc^{I\times J\times K}$. We define the $k$th \textit{$3$-slice} of $T$ as a matrix in $\Rc^{I\times J}$ whose $(i,j)$ entry equals $T(i|j|k)$. For all $i\in I$, $j\in J$, we can define the $i$th $1$-slice and $j$th $2$-slice of $T$ in a similar way. Our further considerations will employ the following lemma, which  is a reformulation of Lemma~2 in~\cite{HK}, see also~\cite{Hast}.

\begin{lem}\label{lemcompl}
Let $\F$ be a field, and let $T$ be a tensor in $\F^{I\times J\times K}$ with $K=\{1,\ldots,k\}\cup\{1',\ldots,\tau'\}$. Denote by $S_i$ the $i$th $3$-slice of $T$ and assume that $S_{1'},\ldots,S_{\tau'}$ are linearly independent and rank-one. Then $\operatorname{rank}_\F T$ is equal to $$\tau+\min\operatorname{rank}_\F \mathcal{T}(V_1,\ldots,V_k),$$ where the tensor $\mathcal{T}(V_1,\ldots,V_k)$ is formed by the slices $S_1-V_1,\ldots,S_k-V_k$, and the $V_j$'s belong to the $\F$-linear span of $S_{1'},\ldots,S_{\tau'}$.
\end{lem}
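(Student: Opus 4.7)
I would prove the equality by establishing two matching inequalities. For the upper bound $\operatorname{rank}_{\F} T \le \tau + \operatorname{rank}_{\F} \mathcal{T}(V_1,\ldots,V_k)$, valid for any admissible choice of $V_j$'s, the idea is to augment a minimal decomposition of $\mathcal{T}(V_1,\ldots,V_k)$ by exactly $\tau$ rank-one summands that restore the primed slices while simultaneously fitting the prescribed $V_j$. Writing $V_j = \sum_s \lambda_{js} S_{s'}$ and using the rank-one factorizations $S_{s'} = u_s v_s^\top$, the tensor $\sum_s u_s \otimes v_s \otimes w_s$ with $w_s(j) = \lambda_{js}$ for $j \le k$ and $w_s(t') = \delta_{st}$ has exactly the slices $(V_1,\ldots,V_k,S_{1'},\ldots,S_{\tau'})$; subtracting it from $T$ leaves the zero-padded $\mathcal{T}(V_1,\ldots,V_k)$, so the two decompositions combine to produce one of $T$ of rank $\tau + \operatorname{rank}_{\F} \mathcal{T}(V)$.

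For the reverse inequality I would start from an arbitrary rank decomposition $T = \sum_{i=1}^{r} a_i \otimes b_i \otimes c_i$ with $r = \operatorname{rank}_{\F} T$ and construct admissible $V$'s satisfying $\operatorname{rank}_{\F} \mathcal{T}(V) \le r - \tau$. A preliminary computation, using only the linear independence of the primed slices, shows that the primed coordinates of the vectors $c_1,\ldots,c_r$ span a $\tau$-dimensional subspace, so after re-indexing I may assume that the matrix $D = (c_i(s'))_{s, i \le \tau}$ is invertible. For $i > \tau$, set $\alpha_i := D^{-1}(c_i(s'))_s$ and define $\tilde{c}_i := c_i - \sum_{m \le \tau} \alpha_{i,m} c_m$; the identity $\sum_m \alpha_{i,m} c_m(s') = c_i(s')$ shows that $\tilde{c}_i$ now lives on the unprimed coordinates. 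Rearranging the sum yields
$$T = \sum_{m \le \tau} Q_m \otimes c_m + T_{\mathrm{low}}, \qquad Q_m := a_m b_m^\top + \sum_{i > \tau} \alpha_{i,m}\, a_i b_i^\top,$$
where $T_{\mathrm{low}} := \sum_{i > \tau} a_i \otimes b_i \otimes \tilde{c}_i$ involves only $r - \tau$ rank-one summands and has vanishing primed slices.

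To finish, I would verify that $T_{\mathrm{low}}$ coincides with $\mathcal{T}(V)$ for some admissible $V$. A direct computation shows that the unprimed $j$-slice of $T_{\mathrm{low}}$ equals $S_j - V_j$ with $V_j := \sum_m c_m(j) Q_m$, so the whole argument reduces to checking $V_j \in \operatorname{span}(S_{1'},\ldots,S_{\tau'})$. Here the same defining identity for $\alpha_{i,m}$ gives $S_{s'} = \sum_m D_{s,m} Q_m$, so the invertible matrix $D$ converts $(Q_1,\ldots,Q_\tau)$ into $(S_{1'},\ldots,S_{\tau'})$; inverting $D$ yields $Q_m \in \operatorname{span}(S_{s'})$ and therefore $V_j \in \operatorname{span}(S_{s'})$, as required. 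The main conceptual obstacle is recognizing that the mixed contribution $\sum_{i > \tau} \alpha_{i,m} a_i b_i^\top$ absorbed into $Q_m$ is exactly what is needed to make the primed slices match while keeping the unprimed correction inside the desired span; once $D$ has been singled out the remainder of the verification is routine bookkeeping.
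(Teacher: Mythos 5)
Your proposal is correct. Note that the paper does not actually prove Lemma~\ref{lemcompl}; it cites it as a reformulation of Lemma~2 of Hopcroft--Kerr (see also H\r{a}stad), and your argument is precisely the standard ``substitution method'' proof given there: the upper bound by augmenting a decomposition of $\mathcal{T}(V_1,\ldots,V_k)$ with $\tau$ rank-one terms built from $S_{s'}=u_sv_s^\top$ and the coefficients $\lambda_{js}$, and the lower bound by using linear independence of the primed slices to extract an invertible $\tau\times\tau$ block $D$ of the third-mode factors, absorbing the remaining terms into matrices $Q_m\in\operatorname{span}(S_{1'},\ldots,S_{\tau'})$, and reading off an admissible $V$ with $\operatorname{rank}_\F\mathcal{T}(V)\le r-\tau$. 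Both inequalities are verified correctly, so the proposal fills in the omitted proof along the same lines as the cited sources.
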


%\begin{lem}\label{lemlem23234}
%Let $\F$ be a field, and let $T$ be a tensor in $\F^{I\times J\times K}$. Denote by $M_1,\ldots,M_\tau$ the $3$-slices of $T$ indexed with $k_1,\ldots,k_\tau\in K$ and assume that the $M_t$'s are linearly independent and rank-one. Then there is an optimal rank decomposition $T=T_1+\ldots+T_r$ in which the $t$th $3$-slice of $T_t$ is $M_t$.
%\end{lem}

%We finalize the section with a corollary of Lemma~\ref{lemlem23234}. The proof of it is technical but straightforward, so we decided to omit it.

%\begin{cor}\label{corcompl}
%Let $\F$ be a field, and let $T$ be a tensor in $\F^{I\times J\times K}$ with $K=\{1,\ldots,k\}\cup\{1',\ldots,\tau'\}$. Denote by $S_i$ the $i$th $3$-slice of $T$ and assume that the matrices $S_{1'},\ldots,S_{\tau'}$ linearly independent and rank-one. Then $\operatorname{rank}_\F T$ is equal to $\tau+\min\operatorname{rank}_\F \mathcal{T}(V_1,\ldots,V_k)$, where the tensor $\mathcal{T}(V_1,\ldots,V_k)$ is formed by the slices $S_1-V_1,\ldots,S_k-V_k$, and the $V_j$'s belong to the $\F$-linear span of $S_{1'},\ldots,S_{\tau'}$.
%\end{cor}

%\begin{proof}
%By Lemma~\ref{lemlem23234}, $T$ admits a rank decomposition $T=T_1+\ldots+T_r$ in which the $3$-slice of $T_t$ indexed with $t'$ equals $S_{t'}$, and we can assume without loss of generality that $T_t$ is the only tensor in the decomposition whose $3$-slice indexed with $t'$ is non-zero. So we see that, for all $j=1,\ldots,k$, the $t$th slice of $T_{\tau+1}+\ldots+T_r$ equals $S_t$ modulo the span of $S_{1'},\ldots,S_{\tau'}$.
%\end{proof}

In the remainder of the paper, $\Rc$ and $\S$ denote commmutative rings satisfying $\Rc\subset\S$. Sometimes (and then it is pointed out explicitly) we impose further restrictions on $\Rc,\S$. By $F=\{f_1,\ldots,f_t\}$ we denote a finite subset of $\Rc[x_1,\ldots,x_n]$.

\section{The complexity of minimal rank matrix completion}

In this section, we determine the algorithmic complexity of the minimal rank matrix completion problem. Namely, we are going to prove the result as in Theorem~\ref{thrm2} but for the minimal rank completion instead of tensor rank.

We consider a finite set $F\subset\Rc[x_1,\ldots,x_n]$ of polynomials, and we are going to construct an incomplete matrix whose minimal rank is three if and only if the equations corresponding to $F$ have simultaneous solutions. For any monomial $p=\xi x_{i_1}\ldots x_{i_k}$ (with $\xi$ in $\Rc$), we define $$\sigma(p)=\{\pm1,\pm\xi,\pm x_{i_1},\pm x_{i_1}x_{i_2},\ldots, \pm x_{i_1}\ldots x_{i_k}, \pm p\}.$$ For a general polynomial $f=p_1+\ldots+p_s$, we define $$\sigma(f)=\sigma(p_1)\cup\ldots\cup\sigma(p_s)\cup\{0,\pm p_1,\pm(p_1+p_2),\ldots,\pm f\}\cup\{\pm x_1,\ldots,\pm x_n\},$$
and for a set of polynomials $F=\{f_1,\ldots,f_t\}$, we define $\sigma(F)=\sigma(f_1)\cup\ldots\cup\sigma(f_t)$.

Clearly, the construction of the set $\sigma=\sigma(F)$ can be done in time polynomial in the length of $F$, and we denote by $\sigma^3$ the set of all triples of elements in $\sigma$. We denote by $\H=\H(F)$ the set of those vectors in $\sigma^3$ that have one of the coordinates equal to $1$ or $-1$. We denote by $\U=\U(x_1,\ldots,x_n)$ the matrix whose columns are vectors in $\H$, and we define $\W(x_1,\ldots,x_n)=\U^\top \U$. Now we are ready to present the key example of an incomplete matrix $\B$ with rows and columns indexed by vectors in $\H$.

\begin{defn}\label{defnb}
For all $u,v\in\H$, we define the polynomial $\delta(u,v)\in\Rc[x_1,\ldots,x_n]$ as the dot product of $u$ and $v$, or, equivalently, by the formula $\delta(u,v)=\W(u|v)$. We define the matrix $\B$ with entries in $\Rc\cup\{*\}$ as follows:

\noindent (1) If $\delta(u,v)$ is a constant polynomial, then we define $\B(u|v)=\delta(u,v)\in\Rc$;

\noindent (2) If $\delta(u,v)$ equals one of the polynomials in $F$, then we set $\B(u|v)=0$;

\noindent (3) In the remaining cases (that is, when $\delta(u,v)$ is neither a constant nor a polynomial in $F$), we set $\B(u|v)=*$.
\end{defn}

Now let $\S$ be an extension of $\Rc$ and assume that the polynomial equations $f_1=0,\ldots,f_t=0$ have a simultaneous solution $(\xi_1,\ldots,\xi_n)$ over $\S$. Clearly, the matrix $\W(\xi_1,\ldots,\xi_n)$ is a rank-three completion of $\B$, and we are going to show that all rank-three completions arise in this way up to the natural action of the group of invertible $3\times3$ matrices.

\begin{lem}\label{lem73}
Let $P,L$ be $3\times|\H|$ matrices over $\S$ such that the matrix $P^\top L$ is a completion of $\B$. Let $C$ be the matrix obtained by taking the columns of $L$ with indices in $E=\{(1,0,0), (0,1,0), (0,0,1)\}$. Then we have $C^\top P=C^{-1} L=\U(\xi_1,\ldots,\xi_n)$, where $(\xi_1,\ldots,\xi_n)$ is a simultaneous solution of the equations $f_1=0,\ldots,f_t=0$.
\end{lem}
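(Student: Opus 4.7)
My plan is to exploit the rigid structure of the $E\times E$ block of $\B$. Since $\delta(e_i,e_j)=e_i\cdot e_j=\delta_{ij}$ is a constant in every cell, the $3\times 3$ submatrix of $\B$ indexed by $E$ equals $I_3$. Consequently $D^\top C=I_3$, where $D$ denotes the $3\times 3$ submatrix of $P$ consisting of the $E$-columns, and this forces both $C$ and $D$ to be invertible with $D=C^{-\top}$. After the change of variables $L'=C^{-1}L$ and $P'=C^\top P$, the product $(P')^\top L'=P^\top L=M$ is the same completion, but now the $E$-columns of both $L'$ and $P'$ are the standard basis vectors $e_1,e_2,e_3$. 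These normalized matrices are my candidates for $\U(\xi)$; the task reduces to showing $L'=P'=\U(\xi)$ for a simultaneous solution $\xi$.

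Reading off the normalized factors gives $(L')_{i,v}=M(e_i|v)$ and $(P')_{i,v}=M(v|e_i)$ for every $v\in\H$, so the rank-three identity $M=(P')^\top L'$ with $I_3$-block reduces to the Schur-complement identity
\[
M(r|c)=\sum_{i=1}^{3} M(r|e_i)\cdot M(e_i|c)\qquad\text{for all }r,c\in\H.
\]
I define $\xi_i\in\S$ by $\xi_i:=M(e_1|(x_i,1,0))$ and proceed to establish the following key claim by induction on the construction depth of a polynomial $p\in\sigma$: for every $w\in\H$ with $w_j=p$, we have $M(e_j|w)=p(\xi)=M(w|e_j)$. The base cases are direct: constants in $\sigma$ are pinned down by $\B$, while for single variables $\pm x_i$ I verify that every occurrence is forced by the Schur-complement identity to agree with the reference value at $(x_i,1,0)$, using a suitable pair $(r,c)\in\H^2$ with $\delta(r,c)$ constant that couples the two occurrences.

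For the inductive step I use the fact that every non-constant $p\in\sigma$ is produced from strictly simpler elements of $\sigma$ by either a single multiplication ($p=q\cdot x_i$ in the chain $\sigma(p_k)$) or a single addition ($p=q+q'$ in the partial-sum chain of $\sigma(f_k)$). In each case I select a pair $(r,c)\in\H^2$ whose dot product $\delta(r,c)$ is either a shorter element of $\sigma$, a constant, or a member of $F$, so that the Schur-complement identity relates the unknown value $M(e_j|w)$ to entries already determined by induction. When $p=f_k\in F$ the completion rule forces $M(r|c)=0$ at positions where $\delta(r,c)=f_k$, while the Schur-complement identity evaluates the same entry to $f_k(\xi)$. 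This is precisely where the system of equations is forced to be satisfied: $f_k(\xi)=0$ for every $k$. Once the key claim holds, the $v$-column of $L'$ is $(v_1(\xi),v_2(\xi),v_3(\xi))^\top=v(\xi)$, so $L'=\U(\xi)$ and likewise $P'=\U(\xi)$, which is exactly the desired conclusion $C^{-1}L=C^\top P=\U(\xi)$.

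The main obstacle will be carrying out the inductive step in full generality. The construction of $\sigma$ is tailored to allow one-step propagation, but one has to pinpoint the specific pairs $(r,c)\in\H^2$ whose dot products link consecutive levels of the chain, and to do this for both the column-version and the row-version of the claim separately because $M$ need not be symmetric. Consistency across all positions $w\in\H$ at which the same polynomial $p$ appears is likewise a bookkeeping challenge that the Schur-complement identity must resolve uniformly.
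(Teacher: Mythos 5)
Your setup coincides with the paper's own: normalizing by $C$ (using that the $E\times E$ block of $\B$ is the identity), reading off $(L')_{i,v}=M(e_i|v)$ and $(P')_{i,v}=M(v|e_i)$, and reducing everything to the bilinear relations $M(r|c)=\sum_i M(r|e_i)\,M(e_i|c)$ at the non-$*$ positions of $\B$ is exactly Steps 1--2 of the paper's proof, and your endgame (at a position with $\delta(r,c)=f_k$ the completion forces the value $0$ while the identity evaluates it to $f_k(\xi)$) is its Step 8. The genuine gap is that the whole content of the lemma lies in the inductive step you defer: you assert that for each one-step extension inside $\sigma$ (one more partial sum, one more variable in a monomial) and for each occurrence of a polynomial in any coordinate of any label one can ``select a pair $(r,c)\in\H^2$'' with $\delta(r,c)$ known which forces the new entry, but you never exhibit such pairs, nor verify that the auxiliary labels you would need actually lie in $\H$ (all coordinates in $\sigma$, one coordinate equal to $\pm1$) and that the relevant dot products are constants, i.e.\ non-$*$ in $\B$. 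That verification is precisely what the paper's Steps 5--7 carry out with concrete vectors: from $(g,0,h)$ being $l$-good it deduces that $(-h,g,g)$ and $(-h,0,g)$ are $p$-good and that permutations of $(g,h,0)$ are $l$-good; addition is routed through $(-1,1,\alpha)$ and $(-\beta,-\alpha-\beta,1)$ (this is why the partial sums were placed in $\sigma$), and multiplication through $(\alpha\beta,1,\alpha)$, $(1,-\alpha\beta,0)$, $(\alpha\beta,1,0)$ (this is why the partial products of each monomial are in $\sigma$). Without these explicit witnesses the induction is not a proof, and their existence is exactly what the design of $\sigma$ and $\H$ was engineered to provide.

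Two further points your sketch treats as bookkeeping are actually load-bearing. First, a single relation $p'_r\cdot l'_c=\B(r|c)$ determines a new coordinate only when the other contributions are already known and the unknown enters with coefficient $\pm1$; since $\S$ is merely a commutative ring in this section, one cannot divide, so the chains must be arranged so that the forced constant ($\pm1$) coordinates guaranteed by membership in $\H$ always multiply the unknown --- in practice one needs two previously established vectors orthogonal to the new one together with its constant coordinates, not one pair $(r,c)$ per step. Second, transporting a value between different coordinate positions and between the $P'$-side and the $L'$-side (your ``consistency across occurrences'') must be established before the addition and multiplication steps can even be formulated; in the paper this is the role of Steps 5--6. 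So your architecture is the same as the paper's, but the crucial propagation argument is missing rather than merely tedious, and as written the proposal does not yet prove the lemma.
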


\begin{proof}
\textit{Step 1.} Since the $3\times3$ submatrix of $\B$ with row and column indices in $E$ is the unity matrix, we get that the submatrix of $C^\top P$ formed by the rows with indices in $E$ is an inverse of $C$. Since the transformation $(P,L)\to (C^\top P,C^{-1} L)$ does not change the property of $P^\top L$ to be a completion of $\B$, we can assume without loss of generality that $C$ is the unity matrix.
%By the definition, the $3\times3$ submatrix of $\B$ with row and column indices in $E=\{(1,0,0), (0,1,0), (0,0,1)\}$ is the unity matrix. Since $P^\top L$ is a completion of $\B$, the corresponding submatrix of $P^\top L$ is again the unity matrix. By the definition of $C$, the submatrix of $C^{-1} L$ formed by the columns with indices in $E$ is the unit matrix, so the submatrix of $C^\top P$ formed by the rows with indices in $E$ is again the unit matrix. These considerations allow us to 

\textit{Step 2.} Denoting by $p_u, l_u$ the $u$th columns of $P,L$, we get \begin{equation}\label{eqpr1}p_u\cdot l_v=\B(u|v) \mbox{ $ $ whenever $ $}\B(u|v)\neq*.\end{equation} By the result of Step~1, we have $p_{(1,0,0)}=(1,0,0)$, $p_{(0,1,0)}=(0,1,0)$, $p_{(0,0,1)}=(0,0,1)$, $l_{(1,0,0)}=(1,0,0)$, $l_{(0,1,0)}=(0,1,0)$, $l_{(0,0,1)}=(0,0,1)$. Combining these equations with~\eqref{eqpr1}, we get that $P(j|u)=\U(j|u)$ whenever $\U(j|u)$ is a constant.

\textit{Step 3.} Using Step~2, we get that, for any variable $x_i$, there is $y_i\in\S$ such that $l_{(1,0,x_i)}=(1,0,y_i)$. In what follows, $y_i$ denotes the third coordinate of the vector $l_{(1,0,x_i)}$, and we write $x=(x_1,\ldots,x_n)$, $y=(y_1,\ldots,y_n)$.

\textit{Step~4.} We are going to complete the proof by checking that $f(y)=0$, for all $f\in F$. We say that the label $u=(a(x),b(x),c(x))$ is \textit{$p$-good} (or \textit{$l$-good}) if the vector $p_u$ (or $l_u$, respectively) equals $(a(y),b(y),c(y))$. By Step~2, the labels consisting of constants are necessarily $p$-good and $l$-good.

\textit{Step~5.} Now assume that a vector $(g,0,h)$ is $l$-good. The vector $p_{(-h,g,g)}$ is then orthogonal to $l$-good vectors $(g,0,h)$ and $(0,-1,1)$, so we see that $p_{(-h,g,g)}$ is collinear to $(-h,g,g)$. Since every column of $\U$ has at least one constant entry, the result of Step~2 implies that $p_{(-h,g,g)}$ and $(-h,g,g)$ are in fact equal. In other words, $(-h,g,g)$ is a $p$-good vector, and we note that the vector $(g,h,0)$ is $l$-good for a similar reason, --- namely, because $l_{(g,h,0)}$ is orthogonal to $p$-good vectors $(0,0,1)$ and $(-h,g,g)$.

\textit{Step~6.} If $(g,0,h)$ is $l$-good, the vector $(-h,0,g)$ is $p$-good because it is orthogonal to $l$-good vectors $(g,0,h)$ and $(0,1,0)$. The symmetry and Step~5 imply that, in the case when $(g,0,h)$ is $l$-good, any permutation of $(g,h,0)$ is $l$-good and any permutation of $(-h,0,g)$ is $p$-good.

\textit{Step~7.} Let us now assume that $(1,0,\alpha)$, $(1,0,\beta)$ are $l$-good.

\textit{Step~7.1.} Assume $\alpha+\beta\in\sigma$. We see that $(-1,1,\alpha)$ is $l$-good because it is orthogonal to the vectors $(1,1,0)$ and $(0,-\alpha,1)$, which are $p$-good by Steps~4 and~6. Now we see that $(-\beta,-\alpha-\beta,1)$ is $p$-good because it is orthogonal to the $l$-good vectors $(-1,1,\alpha)$ and $(1,0,\beta)$. Finally, the vector $(0,1,\alpha+\beta)$ is $l$-good because it is orthogonal to the $p$-good vectors $(-\beta,-\alpha-\beta,1)$ and $(1,0,0)$. The vector $(1,0,\alpha+\beta)$ is $l$-good by Step~6.

\textit{Step~7.2.} Assume $\alpha\beta\in\sigma$. The vector $(\alpha\beta,1,\alpha)$ is $l$-good because it is orthogonal to the vectors $(0,-\alpha,1)$ and $(1,0,-\beta)$, which are $p$-good by Step~6. The vector $(1,-\alpha\beta,0)$ is $p$-good because it is orthogonal to the $l$-good vectors $(0,0,1)$ and $(\alpha\beta,1,\alpha)$. Finally, we see that $(\alpha\beta,1,0)$ is a $l$-good vector because it is orthogonal to $p$-good vectors $(1,-\alpha\beta,0)$ and $(0,0,1)$. Step~5 implies that $(1,0,\alpha\beta)$ is $l$-good as well.

\textit{Step~8.} The results of Step~7 show that the vector $(1,0,s)$ is $l$-good for all $s\in\sigma$. For all $f\in F$, we get $p_{(0,0,1)}\cdot l_{(1,0,f)}=0$ because $\B((0,0,1)|(1,0,f))=0$, which means that $(0,0,1)\cdot(1,0,f(y))=0$ or $f(y)=0$. The proof is complete. 
\end{proof}

The following corollary is immediate from Lemma~\ref{lem73}.

\begin{cor}\label{cor74}
The matrix $\B(F)$ admits a completion of rank three with respect to $\S$ if and only if the equations $f_1=0,\ldots,f_t=0$ have a simultaneous solution over $\S$.
\end{cor}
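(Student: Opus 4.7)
The plan is to establish the two directions of the equivalence separately, with essentially all the technical work already packaged in Lemma~\ref{lem73}.

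For the backward implication, I would start from a simultaneous solution $(\xi_1,\ldots,\xi_n)\in\S^n$ of $f_1=\cdots=f_t=0$ and exhibit an explicit rank-three completion: the matrix $M=\W(\xi_1,\ldots,\xi_n)=\U(\xi_1,\ldots,\xi_n)^\top\U(\xi_1,\ldots,\xi_n)$. Because $\U(\xi_1,\ldots,\xi_n)$ has three rows, $M$ automatically has rank at most three. The remainder of this direction is a case-by-case check against Definition~\ref{defnb}: if $\delta(u,v)$ is constant, then $M(u|v)=\delta(u,v)=\B(u|v)$; if $\delta(u,v)\in F$, then $M(u|v)=\delta(u,v)(\xi_1,\ldots,\xi_n)=0=\B(u|v)$; and if $\delta(u,v)$ falls under neither case, then $\B(u|v)=*$ and there is nothing to check. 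Finally, the $E\times E$ submatrix of any completion of $\B$ is the $3\times3$ identity, so every completion of $\B$ has rank at least three, and $M$ therefore has rank exactly three.

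For the forward implication, I would start from a rank-three completion $M$ of $\B$ over $\S$ and write it as $M=P^\top L$ with $P,L\in\S^{3\times|\H|}$. Lemma~\ref{lem73}, applied to this factorization, directly produces a tuple $(\xi_1,\ldots,\xi_n)\in\S^n$ satisfying $f_1=\cdots=f_t=0$, which is exactly what is needed.

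The whole argument is short because Lemma~\ref{lem73} carries the heavy lifting. I do not anticipate a genuine obstacle at the level of the corollary: the only points deserving attention are the routine entry-by-entry verification that $\W(\xi_1,\ldots,\xi_n)$ completes $\B$, and the observation that the identity submatrix indexed by $E$ prevents any rank-$\le 2$ completion, so ``rank three'' and ``rank at most three'' coincide for $\B$.
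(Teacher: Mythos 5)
Your proposal is correct and follows essentially the same route as the paper: the solution $(\xi_1,\ldots,\xi_n)$ gives the explicit completion $\W(\xi_1,\ldots,\xi_n)=\U^\top\U$, and conversely any rank-three completion factors as $P^\top L$ over $\S$, to which Lemma~\ref{lem73} applies directly. Your extra checks (the entry-by-entry verification against Definition~\ref{defnb} and the identity $E\times E$ submatrix forcing rank exactly three) are details the paper leaves implicit, but nothing in substance differs.
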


\begin{proof}
As said above, the matrix $\W(\xi_1,\ldots,\xi_n)$ is a rank-three completion of $\B$, provided that $(\xi_1,\ldots,\xi_n)$ is a simultaneous solution of the polynomial equations $f_1=0,\ldots,f_t=0$. Conversely, if there is no such a solution over $\S$, then by Lemma~\ref{lem73} the matrix $\B$ admits no completion of rank three with respect to $\S$.
\end{proof}

Corollary~\ref{cor74} proves an analogue of Theorem~\ref{thrm2} for the minimal rank matrix completion problem. In particular, we get Theorem~\ref{cor79} as well as the matrix completion analogues of the complexity and decidability results formulated in the previous section for tensor rank. More than that, our result for the minimal rank completion allows the rings $\Rc$ and $\S$ to contain zero divisors, while in the subsequent consideration of tensor ranks we will often assume that $\S$ is an integral domain.

\section{Proof of Theorem~\ref{thrm2}}

In this section, we assume that $F$ is the family of polynomials and $\B$ is the incomplete matrix as in the previous section. We begin with a technical lemma that we will need in future considerations. %We note that in  sometimes we will assume that $\Rc,\S$ are integral domains or fields. 

\begin{lem}\label{lem75}
Let $\S$ be an integral domain, and let $\F$ be a field containing $\S$. Assume that $W_1,W_2,W_3$ are rank-one matrices over $\F$ such that $W_1+W_2+W_3$ is a completion of $\B$. Assume that, for some $\lambda_1,\lambda_2,\lambda_3\in\S$, some rank-one matrix $W_0$ coincides with $\lambda_1 W_1+\lambda_2 W_2+\lambda_3 W_3$ everywhere except those entries that are $*$'s in $\B$. Then there is a $\mu\in\F$ such that $W_0$ is one of $\mu W_1,\mu W_2,\mu W_3$.
\end{lem}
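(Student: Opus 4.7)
I plan to apply Lemma~\ref{lem73} to the decomposition $W_1 + W_2 + W_3$, use the rigidity of the $E \times E$ block of $\B$ to force all but at most one $\lambda_i$ to vanish, and then compare two Lemma~\ref{lem73} normal forms—one coming from the given decomposition and one from an auxiliary rank-$3$ completion built from $W_0$—to propagate the pointwise identity into a global one.

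\emph{Normalization.} Writing $W_k = a_k b_k^\top$ and assembling the $3 \times |\H|$ matrices $P, L$ with rows $a_k^\top$ and $b_k^\top$, the sum $W_1 + W_2 + W_3 = P^\top L$ is a completion of $\B$. Lemma~\ref{lem73} applied over $\F$ produces a simultaneous zero $\xi \in \F^n$ of $f_1, \ldots, f_t$ and an invertible $C \in \F^{3 \times 3}$ satisfying $C^\top P = C^{-1} L = \U(\xi)$. Setting $V := \U(\xi)^\top$, this gives $W_k = V R_k V^\top$ with $R_k := (C^{-1})_{*k} C_{k*}$ rank-$1$ matrices summing to $I_3$, so $\sum_k \lambda_k W_k = V M V^\top$ where $M := C^{-1}\,\mathrm{diag}(\lambda_1, \lambda_2, \lambda_3)\, C$.

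\emph{Pinning down the $\lambda_i$.} Since the rows of $V$ indexed by $E := \{e_1, e_2, e_3\}$ form $I_3$, the $E \times E$ submatrix of $\sum_k \lambda_k W_k$ equals $M$. The block $\B|_{E \times E} = I_3$ has no $*$-entries and $W_0$ has rank $\le 1$, so $M$ has rank $\le 1$; being similar to $\mathrm{diag}(\lambda_1, \lambda_2, \lambda_3)$, this forces at most one $\lambda_i$ to be nonzero. After permuting indices, I may assume $\lambda_2 = \lambda_3 = 0$, so $\sum_k \lambda_k W_k = \lambda_1 W_1$.

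\emph{Globalization.} The remaining task is to conclude $W_0 = \mu W_1$ globally from $W_0 = \lambda_1 W_1$ on non-$*$ entries. The case $W_0 = 0$ or $\lambda_1 = 0$ (which, by inspecting $\B(e_1, e_1) = 1$, reduces to $W_0 = 0$) yields $\mu = 0$. Otherwise the matrix $W_0^\# := \lambda_1^{-1} W_0 + W_2 + W_3$ agrees with $W_1 + W_2 + W_3 = \B$ on every non-$*$ entry, so $W_0^\#$ is a completion of $\B$; as $\B|_{E \times E} = I_3$ has rank $3$, so does $W_0^\#$. Reapplying Lemma~\ref{lem73} to the decomposition $\lambda_1^{-1} W_0 + W_2 + W_3$ produces another zero $\xi' \in \F^n$ and invertible $C' \in \F^{3 \times 3}$ with $W_0^\# = V' (V')^\top$ for $V' := \U(\xi')^\top$. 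Writing out the equality of the two rank-$1$ expressions for the shared summand $W_2 = V R_2 V^\top = V' R_2' (V')^\top$ and evaluating at $u \in E$ yields, after rescaling, the identity $u(\xi) \cdot \alpha = u(\xi') \cdot \alpha$ for all $u \in \H$, where $\alpha := (C^{-1})_{*2} \neq 0$. Test vectors such as $(x_k, 1, 0)$, $(1, x_k, 0)$, or $(1, 0, x_k)$—each of which lies in $\H$ by virtue of a $\pm 1$ coordinate—force $(\xi_k - \xi_k')\alpha_i = 0$ for some $i$ with $\alpha_i \ne 0$, so $\xi = \xi'$. Consequently $V = V'$ and $W_0^\# = V V^\top = W_1 + W_2 + W_3$, whence $\lambda_1^{-1} W_0 = W_1$ and $W_0 = \lambda_1 W_1$ globally; $\mu := \lambda_1$ works.

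\emph{Main obstacle.} The critical step is the extraction of $\xi = \xi'$ from the fact that the two rank-$3$ decompositions share the rank-$1$ summands $W_2$ and $W_3$. The argument hinges on the specific structure of $\sigma$ (the variables $x_k$ appear as bare monomials) and of $\H$ (the relevant test vectors lie in $\H$ because they have $\pm 1$ coordinates). Without this rigidity, two distinct rank-$3$ completions of $\B$ need not come from the same simultaneous solution of $F$.
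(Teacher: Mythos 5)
Your argument follows the paper's proof in all essentials: normalize via Lemma~\ref{lem73}, use the $*$-free identity block indexed by $E$ to conclude that at most one $\lambda_i$ is non-zero, then run Lemma~\ref{lem73} a second time on the completion $\lambda_1^{-1}W_0+W_2+W_3$ and exploit the rigidity of $\H$ to force $\xi'=\xi$. The only real difference is how $\xi'=\xi$ is extracted: the paper replaces $b_3$ by $b_0$, forms $Q$, and observes that $\U(\psi)-\U(\xi)=C^{-1}(Q-L)$ has rank at most one and a zero in every column (every vector of $\H$ has a $\pm1$ coordinate), hence a zero row; you instead use the shared summand $W_2$, identify its column factor in both normal forms to get $\bigl(\U(\xi)^\top-\U(\xi')^\top\bigr)\alpha=0$ with $\alpha=(C^{-1})_{*2}\neq0$, and test against the columns $(x_k,1,0)$, $(1,x_k,0)$, $(1,0,x_k)\in\H$. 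This uses the same structural input (constants $\pm1$ and bare variables inside $\sigma$), it is correct (note only that the up-to-scale uniqueness of the rank-one factorization of $W_2$ needs $W_2\neq0$, which follows from the identity $E$-block), and it even spares the paper's closing ``by symmetry'' step, since $\xi'=\xi$ gives $W_0^{\#}=W_1+W_2+W_3$ and hence $\lambda_1^{-1}W_0=W_1$ in one stroke.

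The one genuine flaw is the parenthetical disposal of the case $\lambda_1=0$. That $\B(e_1|e_1)=1$ says nothing about $W_0$: if all $\lambda_i$ vanish, the hypothesis only forces $W_0$ to vanish at the non-$*$ entries, and a rank-one matrix can do that without being zero --- if $\B(u|v)=*$ for some pair $(u,v)$, the matrix unit $e_ue_v^\top$ is such a matrix, and it is not of the form $\mu W_i$: each $W_i$ has a non-zero $E\times E$ block (the three $E$-blocks are rank one and sum to $I_3$), whereas the $E\times E$ block of $e_ue_v^\top$ is zero because the $E$-positions are non-$*$. So when all $\lambda_i$ are zero the conclusion itself can fail, and this case must be excluded by hypothesis (equivalently, one should assume that $\lambda_1W_1+\lambda_2W_2+\lambda_3W_3$ does not vanish identically on the non-$*$ entries), not ``reduced to $W_0=0$''. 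To be fair, the paper's own proof makes the same tacit assumption when it normalizes to ``$W_0$ coincides with $W_3$ at the non-$*$ entries'', so this does not distinguish your proof from the paper's; but the justification you give for that case is wrong and should be replaced by the explicit non-degeneracy assumption, after which your argument is complete.
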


\begin{proof}
We define the $E$\textit{-submatrix} of $\B$ as the one formed by the rows and columns with indices in $E=\{(1,0,0), (0,1,0), (0,0,1)\}$. We note that this submatrix is the unity matrix and, in particular, it does not contain $*$'s. The corresponding $E$-submatrices of $W_1,W_2,W_3$ are rank-one and sum to a rank-three matrix, so the rank of the $E$-submatrix of $\lambda_1 W_1+\lambda_2 W_2+\lambda_3 W_3$ equals the number of non-zero $\lambda_i$'s. Therefore, it suffices to consider the case when $W_0$ coincides with $W_3$ at the non-$*$ entries. We are going to show that in fact $W_0=W_3$.

We write $W_j=a_j b_j^\top$ with $a_j,b_j\in\F^\H$, and we can arrange these vectors so that the coordinates of $a_0$ (or $b_0$) with indices in $E$ are equal to the corresponding coordinates of $a_3$ (or $b_3$, respectively).

We define $P$ as the matrix formed by the rows $a_1,a_2,a_3$ and $L$ as the matrix formed by the rows $b_1,b_2,b_3$. We have that $P^\top L=W_1+W_2+W_3$ is a completion of $\B$, and Lemma~\ref{lem73} implies that $C^{-1}L=\U(\xi_1,\ldots,\xi_n)$, where $(\xi_1,\ldots,\xi_n)$ is a solution of $f_1=0,\ldots,f_t=0$, and $C$ is the $3\times3$ matrix formed by the columns of $L$ with indices in $E$. Similarly, we define $Q$ as the matrix formed by the rows $b_1,b_2,b_0$, and we get that $C^{-1}Q=\U(\psi_1,\ldots,\psi_n)$, where $(\psi_1,\ldots,\psi_n)$ is a solution of $f_1=0,\ldots,f_t=0$. The entries of $Q-L$ are all zero except possibly those in the third row, so the matrix \begin{equation}\label{eqeqeq}\U(\psi_1,\ldots,\psi_n)-\U(\xi_1,\ldots,\xi_n)=C^{-1}(Q-L)\end{equation} has rank at most one. By its definition, the set $\H$ consists of vectors one of whose coordinates is constant, so that the matrix~\eqref{eqeqeq} has a zero in every column. Since the rank of~\eqref{eqeqeq} is at most one, it has a zero row, which suffices to conclude that $(\psi_1,\ldots,\psi_n)=(\xi_1,\ldots,\xi_n)$. Therefore, the matrix~\eqref{eqeqeq} is zero, which means that $Q=L$ or $b_0=b_3$. By symmetry we have $a_0=a_3$ and $W_0=W_3$.
\end{proof}

Now we are going to construct a reduction from the matrix completion problem to tensor rank. We enumerate by $k_1=(i_1,j_1),\ldots,k_\tau=(i_\tau,j_\tau)$ the entries which are $*$'s in $\B$, so $\tau$ is the number of such entries. We define the tensor $T=T(\B)$ in $\Rc^{\H\times\H\times K}$, where $K=\{0,1,\ldots,\tau\}$, as follows:

\noindent (1) $T(u|v|t)=\B(u|v)$ if $t=0$ and $\B(u|v)\neq*$;

\noindent (2) $T(u|v|t)=1$ if $k_t=(u,v)$;

\noindent (3) $T(u|v|t)=0$ in the remaining cases.

In particular, the zeroth $3$-slice of $T$ is obtained from $\B$ by replacing the $*$'s with zeros, and the other $3$-slices are the $(i_t,j_t)$ matrix units. This construction has been proposed in~\cite{Derk} by Derksen, who showed that $\operatorname{rank}_{\S} T\geqslant k+r$, where $r$ is the minimal rank of any completion of $\B$, in the case when $\S$ is a field. We are going to adapt Derksen's technique and prove the same result for any integral domain $\S$. Namely, we will prove that $\operatorname{rank}_{\S} T=\tau+3$ if and only if $\B$ admits a completion of rank three with respect to $\S$. This result is sufficient to prove Theorem~\ref{thrm2} because, by Corollary~\ref{cor74}, the matrix $\B$ admits a completion of rank three with respect to $\S$ if and and only if the equations corresponding to polynomials in $F$ have a simultaneous solution over $\S$.

\begin{lem}\label{lem76}
If $\S$ is an integral domain, then $\operatorname{rank}_{\S} T\geqslant \tau+3$.
\end{lem}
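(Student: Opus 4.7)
The plan is to reduce to the field setting and then apply Lemma~\ref{lemcompl} directly, combining it with the simple observation that $\B$ contains the $3\times 3$ identity as a submatrix.

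First I would pass to the fraction field: let $\F$ be the field of fractions of $\S$. Any rank decomposition of $T$ over $\S$ is also a decomposition over $\F$, so $\operatorname{rank}_{\S} T \geqslant \operatorname{rank}_{\F} T$, and it suffices to lower-bound the rank over the field. This step is where integral-domain hypothesis on $\S$ is used.

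Next I would set up Lemma~\ref{lemcompl} in the following way. The $\tau$ nonzero 3-slices other than the zeroth one are precisely the matrix units $E_{i_t, j_t}$ corresponding to the starred entries of $\B$; these are obviously rank-one and linearly independent over $\F$. The single remaining 3-slice is the zeroth one, which is $\B_0$ (the matrix $\B$ with the $*$'s replaced by zeros). Lemma~\ref{lemcompl} then gives
$$\operatorname{rank}_{\F} T \;=\; \tau + \min_{V}\operatorname{rank}_{\F}(\B_0 - V),$$
where $V$ ranges over the $\F$-linear span of the matrix units $E_{i_t, j_t}$. But such a $V$ can alter only the starred entries of $\B$, and can alter them to any values in $\F$; thus as $V$ varies, the matrix $\B_0 - V$ ranges over all completions of $\B$ with entries in $\F$. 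So the minimum on the right is exactly the minimal completion rank of $\B$ over $\F$.

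Finally I would note that the $E\times E$ submatrix of $\B$ indexed by $E=\{(1,0,0),(0,1,0),(0,0,1)\}$ is the $3\times 3$ identity: for $u,v\in E$ we have $\delta(u,v)=u\cdot v\in\{0,1\}\subset\Rc$, so these are constant entries (no $*$'s) and form $I_3$. Every completion of $\B$ therefore contains $I_3$ as a submatrix and has rank at least $3$ over any field. Combining, $\operatorname{rank}_{\S}T \geqslant \operatorname{rank}_{\F}T \geqslant \tau + 3$. I don't see any substantive obstacle here; the only subtlety is ensuring that Lemma~\ref{lemcompl} really is applied over a field, which is exactly why the fraction-field reduction is needed, and that the span of the matrix units really does parametrise all completions (which is immediate from the choice of 3-slices).
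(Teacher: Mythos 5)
Your proof is correct, and it takes a somewhat different route from the paper's. The first step is identical: the integral-domain hypothesis is used exactly to embed $\S$ in a field $\F$ (the paper says ``there is a field containing $\S$''), and $\operatorname{rank}_{\S}T\geqslant\operatorname{rank}_{\F}T$ because simple tensors over $\S$ are simple over $\F$. After that the paper simply invokes, as a black box, Derksen's result quoted just before the lemma: for this construction, the rank over a field is at least $\tau+r$, where $r$ is the minimal rank of a completion of $\B$ over that field (with $r\geqslant 3$ being implicit from the identity $E$-submatrix, a fact the paper records in the proofs of Lemmas~\ref{lem73} and~\ref{lem75}). You instead re-derive this bound from the paper's own preliminary Lemma~\ref{lemcompl}: the $\tau$ slices with positive index are distinct matrix units at the starred positions, hence rank-one and linearly independent, and with $k=1$ the lemma gives $\operatorname{rank}_{\F}T=\tau+\min_{V}\operatorname{rank}_{\F}(\B_0-V)$, where subtracting an element $V$ of their span changes exactly the starred entries arbitrarily, so the minimum is precisely the minimal completion rank of $\B$ over $\F$; the $3\times 3$ identity submatrix indexed by $E$ then forces this minimum to be at least $3$. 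What your route buys is self-containedness (no appeal to~\cite{Derk}) and in fact the stronger equality over $\F$, namely that $\operatorname{rank}_{\F}T$ equals $\tau$ plus the minimal completion rank over $\F$ (this is essentially Derksen's proof anyway, so the two arguments are close in substance); what it does not give is Lemma~\ref{lem78}, since the completion it produces has entries in $\F$ rather than in $\S$ --- which is exactly why the paper needs the separate argument via Lemma~\ref{lem75}. For the inequality asserted in Lemma~\ref{lem76} itself, your argument is complete.
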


\begin{proof}
Since $\S$ is an integral domain, there is a field containing $\S$, and the assertion follows from the above mentioned result by Derksen.
\end{proof}

\begin{lem}\label{lem77}
If $\B$ admits a completion of rank three with respect to an integral domain $\S$, then $\operatorname{rank}_{\S} T\leqslant \tau+3$.
\end{lem}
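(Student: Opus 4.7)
The plan is to produce an explicit decomposition of $T$ into $\tau + 3$ simple tensors over $\S$. Let $M$ be a completion of $\B$ of rank at most three with respect to $\S$, so by the definition of matrix rank we may write $M=\sum_{j=1}^{3} a_j b_j^{\top}$ for some vectors $a_j, b_j \in \S^{\H}$. Denote by $e_0, e_1, \ldots, e_{\tau}$ the standard basis of $\S^{K}$, and for each $t\in\{1,\ldots,\tau\}$ let $\epsilon_{i_t}, \epsilon_{j_t} \in \S^{\H}$ be the standard basis vectors indexed by the row and column of the $*$-entry $k_t = (i_t, j_t)$ of $\B$.

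The key identity is
$$T = \sum_{j=1}^{3} a_j \otimes b_j \otimes e_0 + \sum_{t=1}^{\tau} \epsilon_{i_t} \otimes \epsilon_{j_t} \otimes \bigl(e_t - M(i_t|j_t)\, e_0\bigr),$$
which I would verify $3$-slice by $3$-slice. The $0$-th $3$-slice of the right-hand side equals $M - \sum_{t=1}^{\tau} M(i_t|j_t)\, \epsilon_{i_t}\epsilon_{j_t}^{\top}$, namely $M$ with each $*$-position reset to zero; this matches the $0$-th slice of $T$ by case~(1) and the $t=0$ subcase of~(3) in the definition. For each $t \geq 1$ the $t$-th slice of the right-hand side collapses to the matrix unit $\epsilon_{i_t} \epsilon_{j_t}^{\top}$, which is exactly what cases~(2) and~(3) prescribe for the $t$-th slice of $T$.

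Since the displayed formula exhibits $T$ as a sum of $3 + \tau$ simple tensors whose factors all lie in $\S$, this will immediately give $\operatorname{rank}_{\S} T \leq \tau + 3$. I do not foresee a real obstacle here: the mechanism is that each of the $\tau$ extra $3$-slices of $T$ supplies a matrix unit that can be used to cancel the discrepancy $M(i_t|j_t)$ which the rank-three completion introduces at the $*$-position of the $0$-th slice. Combined with Lemma~\ref{lem76} and Corollary~\ref{cor74}, this will yield $\operatorname{rank}_{\S} T = \tau + 3$ precisely when the equations corresponding to $F$ have a simultaneous solution over $\S$, completing the reduction needed for Theorem~\ref{thrm2}.
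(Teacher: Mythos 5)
Your proposal is correct and is essentially the paper's own argument: the paper also places the rank-three completion in the zeroth $3$-slice (as a sum of three simple tensors with third factor $e_0$) and cancels each $*$-entry with a simple tensor supported on $(i_t,j_t)$ having entries $-B(i_t|j_t)$ in slice $0$ and $1$ in slice $t$, which is exactly your $\epsilon_{i_t}\otimes\epsilon_{j_t}\otimes\bigl(e_t-M(i_t|j_t)\,e_0\bigr)$. The slice-by-slice verification matches as well, so nothing further is needed.
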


\begin{proof}
If $B$ is such a completion, then we can get a tensor $B_0$ of rank three over $\S$ by setting $B_0(u|v|t)=B(u|v)$ if $t=0$ and $B_0(u|v|t)=0$ otherwise. Further, we define a simple tensor $S_t$ whose entries are all zeros except $S_t(i_t|j_t|0)=-B(i_t|j_t)$ and $S_t(i_t|j_t|t)=1$. We get $T=B_0+S_1+\ldots+S_\tau$, so the result follows.
\end{proof}

\begin{lem}\label{lem78}
If $\S$ is an integral domain and $\operatorname{rank}_{\S} T\leqslant \tau+3$, then $\B$ admits a completion of rank three with respect to $\S$.
\end{lem}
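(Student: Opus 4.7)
The plan is to start with an $\S$-decomposition $T = \sum_{j=1}^{\tau+3} a_j \otimes b_j \otimes c_j$ and extract a rank-3 completion of $\B$ with entries in $\S$. Writing $R_j := a_j b_j^\top$, the slice identities take the form $M_0 = \sum_j (c_j)_0 R_j$ and $E_{i_t,j_t} = \sum_j (c_j)_t R_j$ for $t=1,\ldots,\tau$, so any candidate rank-3 completion will look like $\sum_j \gamma_j R_j$ with $\gamma_j = (c_j)_0 - \sum_t \beta_t (c_j)_t$ for some $\beta \in \S^\tau$ to be determined. Passing to the field of fractions $\F$ of $\S$, Lemma~\ref{lem76} forces the decomposition to be minimal over $\F$, and a standard minimality argument yields the $\F$-linear independence of the $R_j$'s; together with the $\tau$ independent $\S$-vectors $\lambda^{(t)} := ((c_j)_t)_{j=1}^{\tau+3}$ coming from the slice identities, this implies that the images $\overline{R_j}$ in the quotient of $\F^{\H\times\H}$ by the subspace spanned by $E_{i_1,j_1},\ldots,E_{i_\tau,j_\tau}$ span a three-dimensional subspace. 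Moreover, by expanding in the basis $\{R_j\}$, one checks that the $\S$-module of $\S$-linear relations among the $\overline{R_j}$'s coincides exactly with the $\S$-span of $\lambda^{(1)},\ldots,\lambda^{(\tau)}$.

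The heart of the argument is to choose a three-element subset $J_0 \subseteq \{1,\ldots,\tau+3\}$ and $\beta \in \S^\tau$ so that $\gamma_j = 0$ for all $j \notin J_0$; the resulting $\sum_{j \in J_0} \gamma_j R_j$ would then be a completion of $\B$ with entries in $\S$ of rank at most $3$, and hence of rank exactly $3$ by the identity submatrix lower bound. To achieve this I would apply Lemma~\ref{lem75} to a rank-3 completion $W_1 + W_2 + W_3$ of $\B$ over $\F$ (obtained by combining Lemmas~\ref{lem76} and~\ref{lemcompl}), aiming to establish the dichotomy that each $R_j$ is either a scalar multiple of some $W_i$, or an $\F$-combination of the matrix units $E_{i_t,j_t}$ and hence supported on the $*$-positions. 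The $\F$-linear independence of the $R_j$ together with the dimension count would then force exactly three indices of the first kind, which would become $J_0$, while the remaining $\tau$ contributions to $M_0 = \sum_j (c_j)_0 R_j$ assemble, via the $\S$-span structure of the relations $\lambda^{(t)}$, into an $\S$-linear combination $\sum_t \beta_t E_{i_t,j_t}$ of the matrix units.

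The main obstacle is the rigorous verification of this two-type dichotomy: Lemma~\ref{lem75} as stated requires its scalars $\lambda_1,\lambda_2,\lambda_3$ to lie in $\S$, not merely in $\F$. I expect to overcome this by choosing the three $W_i$'s themselves as scalar multiples of three suitably selected $R_j$'s whose images form a basis of the three-dimensional quotient (so that the ``completion'' coefficients automatically lie in $\S$), and then using the $\S$-relations $\lambda^{(1)},\ldots,\lambda^{(\tau)}$ to read off the required $\lambda_i \in \S$ for each remaining $R_j$. This $\S$-module bookkeeping, coupled with the careful identification of when the hypothesis of Lemma~\ref{lem75} can be ensured with $\S$-coefficients, is the delicate technical core of the proof.
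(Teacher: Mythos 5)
Your overall strategy is essentially the paper's: starting from a length-$(\tau+3)$ decomposition over $\S$, one uses the minimality forced by Lemma~\ref{lem76} together with Lemma~\ref{lem75} to conclude that all but three of the terms contribute only to the $*$-positions of the zeroth $3$-slice (the paper phrases this as: at most three of the $S_i$ have non-trivial zeroth slices), and the desired completion is then the zeroth slice $M_0$ minus a $*$-supported matrix over $\S$, which is exactly your $\sum_{j\in J_0}\gamma_j R_j$. Your bookkeeping in the first paragraph (independence of the $R_j$ from minimality, the three-dimensional quotient, the description of the $\S$-relations) is correct.

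The gap is precisely at the step you flag: your plan for meeting the hypothesis $\lambda_1,\lambda_2,\lambda_3\in\S$ of Lemma~\ref{lem75} does not work. Taking $W_i=\mu_i R_{j_i}$ for a basis $\{\overline{R_{j_1}},\overline{R_{j_2}},\overline{R_{j_3}}\}$ of the quotient does make $W_1+W_2+W_3$ a completion of $\B$ and does place every other $\overline{R_j}$ in the span of the $\overline{W_i}$, but the coefficients of that expansion are obtained by solving a linear system and lie only in $\F$ (say the fraction field of $\S$), not in $\S$. The $\S$-module generated by $\lambda^{(1)},\ldots,\lambda^{(\tau)}$ need not contain, for a given $j\notin J_0$, a relation supported on $\{j\}\cup J_0$ with unit coefficient at $j$ -- over $\S=\mathbb{Z}$ this kind of elimination is impossible in general -- so the $\S$-relations cannot be used to ``read off'' $\lambda_i\in\S$; moreover the normalizing scalars $\mu_i$ are themselves only in $\F$, so even $\S$-coefficients with respect to the $R_{j_i}$ would not become $\S$-coefficients with respect to the $W_i$. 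The correct resolution is that the restriction $\lambda_i\in\S$ in Lemma~\ref{lem75} is superfluous: its proof (the rank count on the $E$-submatrix, which contains no $*$'s, followed by Lemma~\ref{lem73} applied over the field $\F$) never uses $\lambda_i\in\S$ and is valid verbatim for $\lambda_i\in\F$; indeed the paper's own application of Lemma~\ref{lem75} in this proof is in a situation where only $\F$-coefficients are available. With that $\F$-coefficient version your argument closes immediately: any $R_j$ with $j\notin J_0$ and $\overline{R_j}\neq 0$ would be a nonzero multiple of some $R_{j_i}$, contradicting the $\F$-independence of the $R_j$ (equivalently, two simple tensors could be merged, contradicting Lemma~\ref{lem76}), so all remaining terms are $*$-supported and their contribution to $M_0$ is an $\S$-combination of the matrix units without any appeal to the relation module.
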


\begin{proof}
Let $\F$ be a field containing $\S$, and let $$T=S_1+\ldots+S_{\tau+3}$$ be a decomposition of $T$ into a sum of tensors that are simple with respect to $\S$. Let $V$ be the $\F$-linear space spanned by the zeroth $3$-slices of $S_1,\ldots,S_{\tau+3}$ with coordinates $k_1,\ldots,k_\tau$ removed. (That is, the coordinates of vectors in $V$ correspond to those entries that are non-$*$ in $\B$.) Since the $3$-slices of $T$ with indices $1,\ldots,\tau$ are linearly independent and have zeros outside the positions $k_1,\ldots,k_\tau$, we get $\dim V\leqslant 3$. We say that a $3$-slice is \textit{non-trivial} if it has a non-zero element somewhere except $k_1,\ldots,k_\tau$.

Therefore, if there were at least four $S_i$'s whose zeroth $3$-slices are non-trivial, they would be linearly dependent modulo the $3$-slices with positive indices. Using Lemma~\ref{lem75}, we would get that there are two $S_i$'s whose zeroth $3$-slices are non-zero and coincide up to scaling by non-zero elements of $\F$. The sum of these two $S_i$'s would still be a simple tensor with respect to $\F$, which would imply $\operatorname{rank}_{\F} T\leqslant \tau+2$ and contradict Lemma~\ref{lem76}. Therefore, there are at most three $S_i$'s with non-trivial zeroth $3$-slices, and the sum of these $3$-slices is a desired completion of $\B$.
\end{proof}

Lemmas~\ref{lem77} and~\ref{lem78} prove that $\operatorname{rank}_{\S} T(\B)\leqslant \tau+3$ if and only if $\B$ admits a completion of rank three with respect to an integral domain $\S$. By Corollary~\ref{cor74}, this happens if and only if the equations $f_1=0,\ldots,f_t=0$ have a simultaneous solution over $\S$. Since the tensor $T$ was constructed in polynomial time from a given family of polynomials, and since the entries of $T$ do not depend on the extension $\S$, the proof of Theorem~\ref{thrm2} is complete.

\section{Symmetric tensors}

The goal of this section is to prove Theorem~\ref{prob1} and, more generally, the analogue of Theorem~\ref{thrm2} for symmetric rank of matrices over a field. Our argument employs the reduction of the standard tensor rank problem to the symmetric version, and our proof uses Lemma~\ref{lemcompl} in a substantial way. Before we present our reduction, we need to specify some notation.
%In this section we work over a field $\F$ of zero characteristic, and our goal is to prove Theorem~\ref{prob1}. Our argument employs the reduction of the standard tensor rank problem to the symmetric version, and our proof uses Lemma~\ref{lemcompl} in a substantial way. Before we present our reduction, we need to specify some notation to be used in this section.

Throughout this section, we assume that $\F$ is a field.
We say that a tensor $T_0$ is obtained from $T\in\F^{I\times J\times K}$ by \textit{adjoining} a $3$-slice $A\in\F^{I\times J}$ if the $3$-slices of $T$ are precisely those of $T$ and $A$. We use similar definitions for adjoining $1$-slices and $2$-slices. For all $p,q\in I\cap J$, we define the $(p,q)$\textit{-unit} as the matrix $M\in\F^{I\times J}$ such that $M(i|j)=1$ if $i,j\in\{p,q\}$ and $M(i|j)=0$ otherwise. In particular, such a matrix becomes a conventional matrix unit whenever $p=q$. Now we are ready to present the main tool of this section.

\begin{defn}\label{defgadg2}
Let $I=\{i_1,\ldots,i_n\}$, $J=\{j_1,\ldots,j_n\}$, $K=\{k_1,\ldots,k_n\}$ be disjoint indexing sets, and let
$T\in\mathcal{F}^{I\times J\times K}$ be a tensor over a field $\F$. We define the tensor $S=S(T)\in\mathcal{F}^{H\times H\times H}$, where $H=I\cup J\cup K$, as follows:

\noindent (S1) $S(\alpha|\beta|\gamma)=T(i|j|k)$ if $(\alpha,\beta,\gamma)$ is a permutation of $(i,j,k)$ from $I\times J\times K$,

\noindent (S2) $S(\alpha_a|\beta_b|\gamma_c)=0$ otherwise.
\end{defn}

\begin{defn}\label{defredsym}
Let $I,J,K,H$ be the indexing sets as in Definition~\ref{defgadg2}, and let $S$ be a tensor in $\mathcal{F}^{H\times H\times H}$. We define $I^2$ as the set of all pairs $(i_p,i_q)$ with $1\leqslant p\leqslant q\leqslant n$. The sets $J^2, K^2$ are defined similarly, and we denote $\mathcal{H}=H\cup I^2\cup J^2\cup K^2$. We define the tensor $\mathcal{T}=\mathcal{T}(S)\in\mathcal{F}^{\mathcal{H}\times \mathcal{H}\times \mathcal{H}}$ by adjoining the $\pi$-unit $1$-slices, $\pi$-unit $2$-slices, and $\pi$-unit $3$-slices to $S$. Here, an index $\pi$ runs over the set $I^2\cup J^2\cup K^2$, and these $\pi$-unit slices get the index $\pi\in\H$.
\end{defn}

In the rest of the section, we are going to prove that
\begin{equation}\label{eqmainsym}
\operatorname{srank}_\F\,\mathcal{T}(S(T))=\operatorname{rank}_\F\,T+4.5(n^2+n)
\end{equation}
under a mild assumption on the cardinality of $\mathcal{F}$. This would show that $T\to\mathcal{T}(S(T))$ is a polynomial time many-one reduction from the standard rank problem to symmetric one. (We note that the assumption $|I|=|J|=|K|=n$ does not cause a loss of generality because a tensor that satisfies this assumption can be obtained from any tensor by adjoining zero slices, and this transformation does not change the rank.) In particular, this would give the analogue of Theorem~\ref{thrm2} for symmetric rank and prove Theorem~\ref{prob1}.

\begin{lem}\label{lemrank1}
Let $S(T)$ be the tensor as in Definition~\ref{defgadg2}, and $\mathcal{T}(S(T))$ be the tensor as in Definition~\ref{defredsym}.
Then $\operatorname{rank}_\F\,\mathcal{T}(S(T))\geqslant\operatorname{rank}_\F\,T+4.5(n^2+n)$.
\end{lem}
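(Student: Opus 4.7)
The plan is to prove the stronger assertion $\operatorname{rank}_\F \mathcal{T}(S(T)) \geq \operatorname{rank}_\F T + 4.5(n^2+n)$, which suffices because the symmetric rank of any symmetric tensor is at least its ordinary rank. My strategy is a threefold application of Lemma~\ref{lemcompl}, one slice reduction per mode, peeling off the adjoined slices at indices $\pi \in \Pi := I^2 \cup J^2 \cup K^2$.

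For each $\pi=(\alpha_p,\alpha_q) \in \Pi$, the $\pi$-unit equals $(e_{\alpha_p}+e_{\alpha_q})(e_{\alpha_p}+e_{\alpha_q})^\top$ (or $e_{\alpha_p}e_{\alpha_p}^\top$ when $p=q$) and is therefore rank-one; the $\pi$-units with $\pi \in I^2$, $J^2$, $K^2$ sit in disjoint diagonal blocks of $\mathcal{H} \times \mathcal{H}$, and within each block they form a basis of the symmetric matrices on that block, so they are linearly independent. Applying Lemma~\ref{lemcompl} in the 3-mode then contributes $|\Pi|=3n(n+1)/2$ to the rank and leaves a residual $R_1 \in \F^{\mathcal{H}\times\mathcal{H}\times H}$ whose 3-slice at $k \in H$ differs from the original by a symmetric matrix $V_k$ supported on the diagonal blocks of $H \times H$. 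Because $\Pi \cap H = \varnothing$ and every $V_k$ lives on $H\times H$, the 2-slices of $R_1$ and the 1-slices of the next residual at indices in $\Pi$ remain the unmodified $\pi$-units, so I can iterate the same reduction first in the 2-mode and then in the 1-mode. After the three peelings I have accumulated $3|\Pi|=4.5(n^2+n)$ in the rank and reduced the problem to a residual $R_3 \in \F^{H\times H\times H}$ with entries
\[
R_3(i,j,k) = S(T)(i,j,k) - V_k(i,j) - W_j(i,k) - U_i(j,k),
\]
where $V_k$, $W_j$, $U_i$ are symmetric matrices on $H \times H$ supported on $(I\times I)\cup(J\times J)\cup(K\times K)$.

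I would finish by restricting $R_3$ to $I \times J \times K$: the diagonal-block support of each correction forces $V_k(i,j) = W_j(i,k) = U_i(j,k) = 0$ whenever $(i,j,k) \in I \times J \times K$, and by Definition~\ref{defgadg2} the restriction of $S(T)$ to $I \times J \times K$ is precisely $T$. Since restriction of indices does not increase rank, $\operatorname{rank}_\F R_3 \geq \operatorname{rank}_\F T$ for every admissible choice of $V$, $W$, $U$, and combining with the $4.5(n^2+n)$ collected during the peelings yields the claimed lower bound on $\operatorname{rank}_\F \mathcal{T}(S(T))$.

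The main technical obstacle is the consistency between successive applications of Lemma~\ref{lemcompl}: I need to verify that after each peeling the adjoined rank-one slices in the next mode remain rank-one and linearly independent, and that the cumulative corrections really do stay confined to $H \times H$ so that the $\pi$-units at indices in $\Pi$ are untouched. Once that book-keeping is settled, both the slice peeling and the final restriction of the tensor to a sub-product of its index sets are mechanical.
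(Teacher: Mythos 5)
Your proposal is correct and follows essentially the same route as the paper: three successive applications of Lemma~\ref{lemcompl} to the adjoined $\pi$-unit slices (rank-one and linearly independent), with the observation that every correction taken from their span is confined to the diagonal blocks $(I|I)\cup(J|J)\cup(K|K)$, so the $(I|J|K)$ block of the residual is still $T$ and restriction to that block gives $\operatorname{rank}_\F T$. Your extra book-keeping (that the $\pi$-unit slices in the remaining modes are untouched by earlier corrections, since $\Pi\cap H=\varnothing$) just makes explicit what the paper leaves implicit.
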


\begin{proof}
%Assume $\operatorname{rank}_\F\,\mathcal{T}=r$, and let $T_1+\ldots+T_r$ be a rank decomposition of $T$. We apply Lemma~\ref{lemlem23234} to the unit $1$-slices of $\mathcal{T}$ that were adjoined in Definition~\ref{defredsym}. We get that, for all $\pi\in I^2\cup J^2\cup K^2$, the set $\{T_1,\ldots,T_r\}$ contains a tensor with $\pi$th $1$-slice equal to the $\pi$-unit, a tensor with $\pi$th $2$-slice equal to the $\pi$-unit, and a tensor with $\pi$th  $3$-slice equal to the $\pi$-unit.
%that $\operatorname{rank}_\F\,\mathcal{T}$ is at least $4.5(n^2+n)$ plus =\min\operatorname{rank}_\F\,T_3+1.5(n^2+n)$, where $T_3$ runs over the set of tensors obtained from $\mathcal{T}$ by removing  
Let $M_3$ be a linear combination of the $3$-slices of $\mathcal{T}$ with indices in $I^2\cup J^2\cup K^2$. By Definition~\ref{defredsym}, all non-zero entries of these slices belong to the blocks $(I|I)$, $(J|J)$, $(K|K)$, and the same conclusion holds for $M_3$. Therefore, the addition of $M_3$ to any slice of $\mathcal{T}$ does not change its $(I|J|K)$ block. Similarly, the addition of any linear combination of the $1$-slices (or $2$-slices) with indices in $I^2\cup J^2\cup K^2$ to any $1$-slice (or $2$-slice, respectively) of $\mathcal{T}$ does not change the $(I|J|K)$ block.

We apply Lemma~\ref{lemcompl} to the $1$-slices with indices in $I^2\cup J^2\cup K^2$, then to the $2$-slices with these indices, and then to the $3$-slices. We get the desired inequality because the $(I|J|K)$ block of $\mathcal{T}$ is $T$ and because $3(|I^2|+|J^2|+|K^2|)=4.5(n^2+n)$.
\end{proof}

In particular, we see that $\operatorname{srank}_\F\,\mathcal{T}(S(T))\geqslant\operatorname{rank}_\F\,T+4.5(n^2+n)$. Our proof of the opposite inequality is more technical, and we need some more notation.

Let $T\in\F^{I\times I\times I}$ be a symmetric tensor, and let $\rho$ be a permutation of $I$, and let $(f_i)$ be a family of non-zero elements of $\F$. We say that the tensor whose $(i|j|k)$ entry equals $f_if_jf_kT(\rho(i)|\rho(j)|\rho(k))$ is obtained from $T$ by a \textit{monomial transformation}. The \textit{scaling} of $T$ is the multiplication of every entry of $T$ by a non-zero scalar in $\F$. Indices $i,\hat{\imath}\in I$ are called \textit{twins} with respect to $T$ if the $\hat{\imath}$th $1$-, $2$-, and $3$-slices are equal to the corresponding $i$th slices. \textit{Removing a twin} $\hat{\imath}$ is an operation that removes the $\hat{\imath}$th $1$-, $2$-, and $3$-slices from $T$.  %The transformation of adding an index $\hat{\imath}$ to $I$ \textit{repeating} an index $i\in I$ gives the symmetric tensor whose $(I|I|I)$ block equals to $T$, and the $\hat{\imath}$th $1$-, $2$-, and $3$-slices are equal to the corresponding $i$th slices.

\begin{obs}\label{obstran}
Monomial transformations, scaling, and removing a twin leave the symmetric ranks of a given tensor invariant.
\end{obs}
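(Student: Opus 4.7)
The plan is to verify the invariance of $\operatorname{srank}_\F$ under each of the three operations separately, in each case by translating decompositions explicitly between the tensor before and after the operation. Recall that $\operatorname{srank}_\F T\leqslant r$ is equivalent to the existence of a representation $T=\sum_{\ell=1}^r\lambda_\ell v_\ell^{\otimes 3}$ with $v_\ell\in\F^I$ and $\lambda_\ell\in\F$.

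I would first handle scaling and monomial transformations, as both preserve the number of summands on the nose. For scaling by a nonzero $c\in\F$, the decomposition $T=\sum\lambda_\ell v_\ell^{\otimes 3}$ corresponds to $cT=\sum(c\lambda_\ell)v_\ell^{\otimes 3}$, giving a bijection at the level of decompositions. For a monomial transformation by a permutation $\rho$ and a family of nonzero scalars $(f_i)$, I would define new vectors $w_\ell\in\F^I$ by $w_\ell(i)=f_i v_\ell(\rho(i))$; then $(w_\ell^{\otimes 3})(i|j|k)=f_if_jf_k v_\ell(\rho(i))v_\ell(\rho(j))v_\ell(\rho(k))$, so $\sum\lambda_\ell w_\ell^{\otimes 3}$ is exactly the monomially transformed tensor. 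Since $\rho$ is a permutation and each $f_i$ is invertible, the map $v_\ell\mapsto w_\ell$ is a bijection of $\F^I$ and hence induces a bijection of decompositions with the same number of summands.

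For removing a twin $\hat\imath$ of $i$, I would prove the two inequalities separately. The easy direction is that removal cannot increase the rank: given $T=\sum\lambda_\ell v_\ell^{\otimes 3}$, restricting each $v_\ell$ to $I\setminus\{\hat\imath\}$ yields a decomposition of the reduced tensor with the same number of summands. For the converse, given a decomposition $T'=\sum\lambda_\ell v_\ell^{\otimes 3}$ of the reduced tensor with $v_\ell\in\F^{I\setminus\{\hat\imath\}}$, I would extend each $v_\ell$ to a vector $\tilde v_\ell\in\F^I$ by the rule $\tilde v_\ell(\hat\imath):=v_\ell(i)$, and then check that $\sum\lambda_\ell\tilde v_\ell^{\otimes 3}$ recovers the original $T$.

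The main point requiring care is this last verification, which is the one place where the twin and symmetry hypotheses are both needed. The key observation is that the twin property together with the symmetry of $T$ imply that replacing any occurrence of $\hat\imath$ by $i$ in an index triple leaves the corresponding entry of $T$ unchanged (one applies the $3$-slice identity $T(\hat\imath|j|k)=T(i|j|k)$, uses symmetry to shift $\hat\imath$ into any coordinate, and iterates). On the algebraic side, the rule $\tilde v_\ell(\hat\imath)=v_\ell(i)$ yields $\tilde v_\ell^{\otimes 3}(a|b|c)=v_\ell^{\otimes 3}(a'|b'|c')$, where primes denote replacement of every $\hat\imath$ by $i$. Evaluating the proposed identity at an arbitrary triple thus reduces to the already-granted equality $T'(a'|b'|c')=\sum\lambda_\ell v_\ell^{\otimes 3}(a'|b'|c')$ on $I\setminus\{\hat\imath\}$, completing the proof.
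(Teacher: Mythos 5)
Your proof is correct; the paper states this observation without proof, and your explicit translation of decompositions (rescaling coefficients for scaling, substituting $w_\ell(i)=f_i v_\ell(\rho(i))$ for a monomial transformation, and restricting/extending the vectors across a twin) is exactly the routine verification being taken for granted. One cosmetic remark on the twin step: you do not even need to invoke symmetry there, since the definition of twins already supplies all three identities $T(\hat{\imath}|j|k)=T(i|j|k)$, $T(j|\hat{\imath}|k)=T(j|i|k)$, $T(j|k|\hat{\imath})=T(j|k|i)$ (and what you call the ``$3$-slice identity'' $T(\hat{\imath}|j|k)=T(i|j|k)$ is the $1$-slice identity in the paper's indexing), so the replacement of every occurrence of $\hat{\imath}$ by $i$ iterates directly.
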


\begin{ex}\label{ex2x2x2}
Let $|\F|\geqslant 9$ and $a\in\F$. Define the $2\times 2\times 2$ tensor $A$ by $A(1|1|1)=a$, $A(1|1|2)=A(1|2|1)=A(2|1|1)=1$, $A(1|2|2)=A(2|1|2)=A(2|2|1)=A(2|2|2)=0$. Then $\operatorname{srank}_\F\,A\leqslant 3$.
\end{ex}

\begin{proof}
One can check that the equation
\begin{equation}\label{eqdecsym}
A=\sum_{t=1}^3 s_t (1, r_t)\otimes (1, r_t)\otimes (1, r_t)
\end{equation}
holds over the rational function field $\F(q)$ if we set 
$$s_1=\frac{(-1 - q + q a)^3}{q (2 + 5 q + 2 q^2 - 3 q a - 3 q^2 a + q^2 a^2)},\,\,\, s_2 = \frac{1}{(-q +q^2) (-2 - q + q a)},$$
$$s_3 = \frac{q^2}{(q-1) (-1 - 2 q + q a)},\,\,\,
r_1 = \frac{q}{-1 - q + q a},\,\,\, r_2 = q,\,\,\, r_3 = 1.$$
If $a\neq 1$, the polynomials in the denominators do not vanish and have a common multiple of degree $7$. The assumption on the cardinality of $\F$ guarantees that, for some assignment $q\in\F$, the equality~\eqref{eqdecsym} is a valid decomposition. The result for $a=1$ follows now from Observation~\ref{obstran}.
\end{proof}

Let us define an $(i,j)$th $3$\textit{-transversal} of a tensor $T$ as the set of entries in which the first two coordinates are equal to $i$ and $j$, respectively. We define the notions of $1$- and $2$-transversals in a similar way. Now we are ready to prove the main technical lemma of this section.

\begin{lem}\label{lemmainsym}
Let $I,J,K,H$ be the indexing sets as in Definition~\ref{defgadg2}, and let $U$ be a symmetric tensor in $\mathcal{F}^{H\times H\times H}$. Assume that $U(i|j|k)=0$ whenever $i\in I$, $j\in J$, $k\in K$, and define the tensor $\mathcal{T}(U)$ as in Definition~\ref{defredsym}. If $|\F|\geqslant 9$, then $\operatorname{srank}_\F\,\mathcal{T}(U)\leqslant 4.5(n^2+n)$.
\end{lem}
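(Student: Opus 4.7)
The plan is to decompose $\mathcal{T}(U)$ into $3\cdot|I^2\cup J^2\cup K^2|=4.5(n^2+n)$ symmetric rank-one tensors, organized as three-term gadgets indexed by $\pi\in I^2\cup J^2\cup K^2$. Fix such a $\pi$, let $v_\pi\in\F^H$ be the natural generator of the $\pi$-unit ($v_\pi=e_{i_p}+e_{i_q}$ when $\pi=(i_p,i_q)$ with $p\ne q$, and $v_\pi=e_{i_p}$ when $p=q$, so that the $\pi$-unit equals $v_\pi v_\pi^\top$), and let $w_\pi\in\F^{\mathcal{H}}$ be the standard basis vector at coordinate $\pi$. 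Using Example~\ref{ex2x2x2} together with $|\F|\geqslant 9$, for any prescribed $A_\pi\in\F$ I select distinct scalars $r_{\pi,1},r_{\pi,2},r_{\pi,3}$ and coefficients $s_{\pi,1},s_{\pi,2},s_{\pi,3}$ with
\[
\sum_t s_{\pi,t}=A_\pi,\quad \sum_t s_{\pi,t}r_{\pi,t}=1,\quad \sum_t s_{\pi,t}r_{\pi,t}^{2}=0,\quad \sum_t s_{\pi,t}r_{\pi,t}^{3}=0.
\]
Writing $u_\pi:=y_\pi+w_\pi$ for a vector $y_\pi\in\F^H$ to be chosen, the $\pi$-gadget will be the sum $\sum_t s_{\pi,t}(v_\pi+r_{\pi,t}u_\pi)^{\otimes 3}$.

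Expanding each summand and collecting by powers of $r_{\pi,t}$, the four moment relations above cancel the coefficients of $\operatorname{Sym}(v_\pi\otimes u_\pi^{\otimes 2})$ and $u_\pi^{\otimes 3}$, and the gadget collapses to
\[
A_\pi v_\pi^{\otimes 3}+\operatorname{Sym}(v_\pi^{\otimes 2}\otimes y_\pi)+\operatorname{Sym}(v_\pi^{\otimes 2}\otimes w_\pi).
\]
Direct inspection shows that $\operatorname{Sym}(v_\pi^{\otimes 2}\otimes w_\pi)$ produces exactly the three adjoined $\pi$-unit slices of $\mathcal{T}(U)$; since $v_\pi$ has no $\pi$-component, this summand automatically places zero at every entry $(\pi|\pi|h)$ with $h\in H$ and at $(\pi|\pi|\pi)$. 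Thus each gadget reproduces the adjoined slices of its $\pi$ and, outside them, contributes only the tensor $A_\pi v_\pi^{\otimes 3}+\operatorname{Sym}(v_\pi^{\otimes 2}\otimes y_\pi)$ to the $H^{\otimes 3}$ block.

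Summing the $1.5(n^2+n)$ gadgets, all adjoined slices are produced exactly, and it remains to choose $(A_\pi)_\pi$ and $(y_\pi)_\pi$ so that $U=\sum_\pi[A_\pi v_\pi^{\otimes 3}+\operatorname{Sym}(v_\pi^{\otimes 2}\otimes y_\pi)]$. Expanding both sides in the standard basis of $\operatorname{Sym}^3\F^H$ indexed by multisets of $H$, a gadget with $\pi\in X^2$ (where $X\in\{I,J,K\}$) only contributes to basis tensors whose multiset has at least two entries in $X$; in particular, multisets $\{i,j,k\}$ with one element in each of $I,J,K$ receive zero contribution, matching the hypothesis $U(i|j|k)=0$. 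For every remaining basis tensor the matching reduces to a linear equation in the $A_\pi$'s and the entries $y_\pi(h)$, which I plan to solve by a case analysis on multiset type: three equal indices, two equal and one distinct, and three pairwise distinct indices drawn from at most two of $I,J,K$.

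The main obstacle is the linear-algebraic verification of this matching step. The coefficient of $\operatorname{Sym}(v_\pi^{\otimes 2}\otimes e_h)$ at a basis tensor depends delicately on whether $h$ lies in the support of $v_\pi$: inside the support one finds structure constants $(3,2,1)$ (for instance, $\operatorname{Sym}(v_\pi^{\otimes 2}\otimes e_{i_p})=3\sigma_{i_pi_pi_p}+2\sigma_{i_pi_pi_q}+\sigma_{i_pi_qi_q}$), while outside the coefficients are uniformly $(1,1,1)$. Identifying, for each admissible multiset, which $A_\pi$ and $y_\pi(h)$ contribute and verifying that the induced linear map is surjective onto the subspace of symmetric tensors that vanish on $I\times J\times K$ is the core combinatorial step of the proof.
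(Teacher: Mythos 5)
Your gadget construction is sound: the four moment identities are exactly the content of Example~\ref{ex2x2x2} transferred from $(e_1,e_2)$ to $(v_\pi,u_\pi)$, each gadget reproduces the three adjoined $\pi$-unit slices and contributes only $A_\pi v_\pi^{\otimes 3}+\operatorname{Sym}(v_\pi^{\otimes 2}\otimes y_\pi)$ to the $H$-block, different gadgets do not interact outside $H$, and the count $3\cdot 1.5(n^2+n)=4.5(n^2+n)$ is correct. The genuine gap is the step you yourself flag: you never prove that $(A_\pi)$ and $(y_\pi)$ can be chosen with $\sum_\pi\bigl[A_\pi v_\pi^{\otimes 3}+\operatorname{Sym}(v_\pi^{\otimes 2}\otimes y_\pi)\bigr]=U$, and this matching statement is the entire substance of the lemma, so as written the argument is incomplete. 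The good news is that no surjectivity analysis of the kind you fear is needed; a triangular choice closes it. For each multiset $\{x,y,z\}$ of three distinct indices of $H$ that does not meet all of $I,J,K$, pick two of them, say $x,y$, lying in a common alphabet, take $\pi=(x,y)$ and put the value $U(x|y|z)$ on the coordinate $y_\pi(z)$ with $z\notin\{x,y\}$: the tensor $\operatorname{Sym}\bigl((e_x+e_y)^{\otimes 2}\otimes e_z\bigr)$ has coefficient $1$ at all six positions of $\{x,y,z\}$ and is otherwise supported only on the multisets $\{x,x,z\}$ and $\{y,y,z\}$. Having fixed all such multisets, correct every multiset $\{x,x,y\}$ with $y\neq x$ through the diagonal pair $\pi=(x,x)$, since $\operatorname{Sym}(e_x^{\otimes 2}\otimes e_y)$ is supported on that single multiset with coefficient $1$, and finally fix $\{x,x,x\}$ through $A_{(x,x)}$. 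In this scheme the off-diagonal $A_\pi$'s and the in-support coordinates of $y_\pi$ are simply set to zero, so the $(3,2,1)$ structure constants you worry about (and any characteristic issues they might cause) never arise; the mixed multisets receive zero from every generator, matching the hypothesis $U(i|j|k)=0$.

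Once that step is written out, your route is correct and genuinely different in organization from the paper's. The paper subtracts, for each off-diagonal pair $\pi$, an explicitly defined tensor $\mathcal{L}_\pi$ whose entries copy $U(\alpha_p|\alpha_q|\cdot)$ onto the whole $\{\alpha_p,\alpha_q\}\times\{\alpha_p,\alpha_q\}$ block and which is reduced to Example~\ref{ex2x2x2} by monomial transformations and twin removal, and then splits the remainder $\Phi$ into $3n$ transversal tensors $\mathcal{M}_u$, each again of symmetric rank at most three; the diagonal pairs are absorbed into the $\mathcal{M}_u$'s. You instead spend one three-term gadget on every pair, including the diagonal ones, produce the adjoined slices directly from the $\operatorname{Sym}(v_\pi^{\otimes 2}\otimes w_\pi)$ term, and push all the remaining bookkeeping into a single linear statement about the $H$-block. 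Both proofs use $4.5(n^2+n)$ rank-one terms and both ultimately rest on Example~\ref{ex2x2x2}; yours makes the origin of the adjoined slices more transparent, while the paper's entrywise subtraction avoids having to argue that the residual block is expressible at all.
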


\begin{proof}
For all pairs $\pi=(\alpha_p,\alpha_q)$, where $\alpha\in\{i,j,k\}$ and $1\leqslant p<q\leqslant n$, we define the tensor $\mathcal{L}_\pi\in\mathcal{F}^{\H\times \H\times \H}$ as follows. For arbitrary $r,s\in\{p,q\}$, we set

%\noindent (L1) $\mathcal{L}_\pi(\alpha_r|\alpha_s|\beta_t)=\mathcal{L}_\pi(\alpha_r|\beta_t|\alpha_s)=\mathcal{L}_\pi(\beta_t|\alpha_r|\alpha_s)=U(\alpha_p|\alpha_q|\beta_t)$ if either $\beta\in\{j,k\}$ or $t>q$,

\noindent (L1) $\mathcal{L}_\pi(\alpha_r|\alpha_s|\beta_t)=U(\alpha_p|\alpha_q|\beta_t)$ if either $\beta\in\{i,j,k\}\setminus\{\alpha\}$ or $t>q$,

\noindent (L2) $\mathcal{L}_\pi(\alpha_r|\alpha_s|\pi)=1$,

\noindent (L3) $\mathcal{L}_\pi(z|x|y)=\mathcal{L}_\pi(y|z|x)=\mathcal{L}_\pi(x|y|z)$ if the latter is already defined,

\noindent (L4) the entries which are not yet defined are zero.

Every $\mathcal{L}_\pi$ can be reduced to the tensor as in Example~\ref{ex2x2x2} by transformations as in Observation~\ref{obstran}. We have $\operatorname{srank}_\F\,\mathcal{L}_\pi\leqslant 3$, so the result of the lemma would follow if we check that the tensor
$$\Phi=\mathcal{T}(U)-\sum\limits_{\alpha\in\{i,j,k\}}\sum\limits_{1\leqslant p<q\leqslant n}\mathcal{L}_{(\alpha_p,\alpha_q)}$$
has symmetric rank at most $9n$ with respect to $\F$. We can check that all the non-zero entries of $\Phi$ are covered by the $(u,u)$th $1$-, $2$-, and $3$-transversals, where $u$ runs over $\H$. We can write $\Phi=\sum_{u\in H}\mathcal{M}_u$, where $\mathcal{M}_u$ is defined as $\mathcal{M}_u(x|y|z)=\Phi(x|y|z)$ if $u$ appears at least twice among $x,y,z\in\H$ and $\mathcal{M}_u(x|y|z)=0$ otherwise. Finally, we get $\operatorname{srank}_\F\,\Phi\leqslant 9n$ because each of the $3n$ tensors $\mathcal{M}_u$ has symmetric rank at most three again by Observation~\ref{obstran} and Example~\ref{ex2x2x2}.
%can be reduced to the tensor as in Example~\ref{ex2x2x2} by transformations as in Observation~\ref{obstran}.
\end{proof}

\begin{lem}\label{lemrank2}
Let $S(T)$ be the tensor as in Definition~\ref{defgadg2}, and $\mathcal{T}(S(T))$ be the tensor as in Definition~\ref{defredsym}. If $|\F|\geqslant9$, then $\operatorname{srank}_\F\,\mathcal{T}(S(T))\leqslant\operatorname{rank}_\F\,T+4.5(n^2+n)$.
\end{lem}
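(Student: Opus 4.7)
My plan is to exhibit $\mathcal{T}(S(T))$ as a sum of $r:=\operatorname{rank}_\F T$ symmetric rank-one tensors plus a correction term that satisfies the hypothesis of Lemma~\ref{lemmainsym}. Fix a minimal decomposition $T=\sum_{\ell=1}^{r}a_\ell\otimes b_\ell\otimes c_\ell$ with $a_\ell\in\F^I$, $b_\ell\in\F^J$, $c_\ell\in\F^K$. Because $I,J,K$ are disjoint in $H$, I extend each factor by zero to an element of $\F^H$ and set $v_\ell:=\tilde a_\ell+\tilde b_\ell+\tilde c_\ell\in\F^H$; let $\hat v_\ell\in\F^{\mathcal{H}}$ be $v_\ell$ further extended by zero on $I^2\cup J^2\cup K^2$. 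Each $\hat v_\ell^{\otimes 3}$ is a symmetric rank-one tensor on $\mathcal{H}^3$.

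The key algebraic observation is that $v_\ell^{\otimes 3}(\alpha|\beta|\gamma)=a_\ell(i)b_\ell(j)c_\ell(k)$ whenever $(\alpha,\beta,\gamma)$ is a permutation of some $(i,j,k)\in I\times J\times K$: the disjointness of the supports of $\tilde a_\ell,\tilde b_\ell,\tilde c_\ell$ forces all but one of the $27$ terms in the expansion of $v_\ell(\alpha)v_\ell(\beta)v_\ell(\gamma)$ to vanish. Summing over $\ell$, the tensor
\[
U_0:=\sum_{\ell=1}^{r}v_\ell^{\otimes 3}-S(T)\in\F^{H\times H\times H}
\]
is symmetric and vanishes on $I\times J\times K$; its nonzero entries are confined to blocks in which at least two of the three coordinates belong to the same one of $I$, $J$, or $K$.

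Next, because $\mathcal{T}(\,\cdot\,)$ adjoins a fixed collection of $\pi$-unit slices that does not depend on the underlying $H^3$-tensor, and because each $\hat v_\ell$ is supported in $H\subset\mathcal{H}$, subtracting $\sum_\ell\hat v_\ell^{\otimes 3}$ from $\mathcal{T}(S(T))$ leaves the adjoined slices untouched and modifies only the $H^3$-block, giving
\[
\mathcal{T}(S(T))-\sum_{\ell=1}^{r}\hat v_\ell^{\otimes 3}=\mathcal{T}(-U_0).
\]
Since $-U_0$ is a symmetric tensor on $H^3$ that vanishes on $I\times J\times K$, Lemma~\ref{lemmainsym} applies (the hypothesis $|\F|\geqslant 9$ is in force) and yields $\operatorname{srank}_\F\mathcal{T}(-U_0)\leqslant 4.5(n^2+n)$. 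Combining with the $r$ symmetric rank-one summands $\hat v_\ell^{\otimes 3}$ produces the desired bound $\operatorname{srank}_\F\mathcal{T}(S(T))\leqslant r+4.5(n^2+n)$.

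The main obstacle, as I see it, is the bookkeeping for the identity displayed just above: I need to confirm that subtracting the $\hat v_\ell^{\otimes 3}$'s really leaves every $\pi$-unit slice intact. This reduces to the single observation that $\hat v_\ell$ is zero on all of $I^2\cup J^2\cup K^2$, so $\hat v_\ell^{\otimes 3}$ is zero outside the $H^3$-block and the correction takes place exactly where $-U_0$ lives. Apart from this verification, the argument is a mechanical combination of the symmetrization trick for disjoint-support factors with the cited lemma.
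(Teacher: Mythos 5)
Your proposal is correct and follows essentially the same route as the paper: the vectors $\hat v_\ell$ are exactly the paper's $w_t$ (factors of a rank decomposition of $T$ placed in the $I$-, $J$-, $K$-parts and zero elsewhere), and the proof concludes by observing that $\mathcal{T}(S(T))-\sum_\ell \hat v_\ell\otimes\hat v_\ell\otimes\hat v_\ell$ is of the form $\mathcal{T}(U)$ with $U$ symmetric and vanishing on $I\times J\times K$, so Lemma~\ref{lemmainsym} applies. Your extra bookkeeping (the $27$-term expansion and the check that the adjoined $\pi$-unit slices are untouched) just makes explicit what the paper leaves to the reader.
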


\begin{proof}
Consider a decomposition $T=\sum_{t=1}^r a_t\otimes b_t\otimes c_t$, where $(a_t)$, $(b_t)$, $(c_t)$ are vectors in $\F^I$, $\F^J$, $\F^K$, respectively. We construct the family $(w_t)$ of vectors in $\F^\H$ by setting the $I$-part of $w_t$ equal to $a_t$, the $J$-part equal to $b_t$, the $K$-part equal to $c_t$, and setting all the other entries equal to zero. Now we see that the tensor
$$\mathcal{T}(S(T))-\sum_{t=1}^r w_t\otimes w_t\otimes w_t$$
satisfies the assumptions imposed on the tensor $\mathcal{T}(U)$ as in Lemma~\ref{lemmainsym}, and the application of this lemma completes the proof.
\end{proof}

Lemmas~\ref{lemrank1} and~\ref{lemrank2} prove that the equality~\eqref{eqmainsym} holds over any field of cardinality at least nine. Therefore, if the ring $\S$ as in Theorems~\ref{thrm2} and~\ref{cor80} is such a field, then the assertions of these theorems hold for the symmetric rank as well. In particular, we get Theorem~\ref{prob1}, and the proofs of all the results mentioned in Section~2 are now complete.

\end{document}